\newtheorem{prop}{Proposition}
\newtheorem{thm}[prop]{Theorem}
\newtheorem{conj}[prop]{Conjecture}
\newtheorem{lem}[prop]{Lemma}
\theoremstyle{definition}
\newtheorem{defn}[prop]{Definition}
\newtheorem{expl}[prop]{Example}
\newtheorem{rem}[prop]{\it Remark}
\newtheorem{emp}[prop]{}
\numberwithin{equation}{section}
\newcommand{\bP}{\mathbb{P}}
\newcommand{\bC}{\mathbb{C}}
\newcommand{\bR}{\mathbb{R}}
\newcommand{\bA}{\mathbb{A}}
\newcommand{\bZ}{\mathbb{Z}}
\newcommand{\oY}{\overline{Y}}
\newcommand{\cX}{\mathcal{X}}
\newcommand{\cY}{\mathcal{Y}}
\newcommand{\cO}{\mathcal{O}}
\newcommand{\cL}{\mathcal{L}}
\newcommand{\cI}{\mathcal{I}}
\newcommand{\cM}{\mathcal{M}}
\newcommand{\tX}{\widetilde{X}}
\newcommand{\tY}{\widetilde{Y}}
\newcommand{\tphi}{\tilde{\phi}}
\newcommand{\tsigma}{\tilde{\sigma}}
\newcommand{\tmu}{\tilde{\mu}}
\newcommand{\tpi}{\tilde{\pi}}
\newcommand{\hrho}{\hat{\rho}}
\newcommand{\tcL}{\widetilde{\mathcal{L}}}
\newcommand{\tcX}{\widetilde{\mathcal{X}}}
\newcommand{\tcY}{\widetilde{\mathcal{Y}}}
\newcommand{\hX}{\hat{X}}
\newcommand{\Proj}{\mathbf{Proj}}
\newcommand{\Supp}{\mathrm{Supp}}
\newcommand{\chitop}{\chi_{\mathrm{top}}}
\def\namedlabel#1#2{\begingroup
   \def\@currentlabel{#2}%
   \label{#1}\endgroup
}
\begin{document}

\title{Hyperbolicity of Cyclic Covers and Complements}
\author{Yuchen Liu}
\address{Department of Mathematics, Princeton University, Princeton, NJ, 08544-1000, USA.}
\email{yuchenl@math.princeton.edu}
\subjclass[2010]{32Q45 (primary), 14J70, 14J29 (secondary).}
\keywords{Brody hyperbolicity, cyclic covers, hypersurfaces}
\thanks{The author is partially supported by NSF grant DMS-0968337.}

\begin{abstract}
We prove that a cyclic cover of a smooth
complex projective variety is Brody hyperbolic if its branch 
divisor is a generic small deformation of a large enough multiple of a 
Brody hyperbolic base-point-free ample divisor.
We also show the hyperbolicity of complements of those branch divisors.
As an application, we find new examples of Brody hyperbolic hypersurfaces in $\bP^{n+1}$ 
that are cyclic covers of $\bP^n$.
\end{abstract}

\maketitle

\section{Introduction}

A complex analytic space $X$ is called \textit{Brody hyperbolic} if there 
are no non-constant holomorphic maps from $\bC$ to $X$. 
Lang's conjecture \cite{lang} predicts that a projective variety $X$ is Brody 
hyperbolic if every subvariety
 of $X$ is of general type. 
More generally, the Green-Griffiths-Lang conjecture \cite{GG, lang}
predicts that if a projective variety $X$ is of general type, then
there exists a proper Zariski closed subset $Z\subsetneq X$ such
that any non-constant holomorphic map $f:\bC\to X$ satisfies 
$f(\bC)\subset Z$. For hyperbolicity of hypersurfaces, Kobayashi
\cite{kob70, kob98} proposed the following conjecture:

\begin{conj}[Kobayashi]\label{kobconj}
For $n\geq 3$, a general hypersurface 
$X\subset\bP^n$ of degree $\geq (2n-1)$ is Brody hyperbolic.
\end{conj}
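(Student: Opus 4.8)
The plan is to follow the jet-differential strategy of Green--Griffiths, Siu, and Demailly, working in the universal family rather than with a single fixed hypersurface. Let $\bP^N$ parametrize hypersurfaces of degree $d\ge 2n-1$ in $\bP^n$, and let $\cX\subset\bP^n\times\bP^N$ be the universal hypersurface with projections $p:\cX\to\bP^n$ and $\pi:\cX\to\bP^N$; here ``general'' $X$ means a fibre $X_t=\pi^{-1}(t)$ for $t$ outside a proper Zariski-closed subset of $\bP^N$. The conceptual input is the fundamental vanishing theorem: if $f:\bC\to X_t$ is a non-constant entire curve and $P$ is a global jet differential of order $k$ and weighted degree $m$ with values in $A^{-1}$ for an ample $A$ (that is, a section of $E_{k,m}T^*_{X_t}\otimes A^{-1}$), then $P(j_kf)\equiv 0$, where $j_kf=(f',\dots,f^{(k)})$. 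The entire curve is thereby trapped in the common zero locus of all such $P$, and the problem reduces to (i) producing enough jet differentials and (ii) intersecting their zero loci down to the empty set.

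For step (i) I would apply Demailly's holomorphic Morse inequalities to the Demailly--Semple tower $X_k\to X$ of directed jets, whose tautological bundle satisfies $H^0\!\big(X,E_{k,m}T^*_X\otimes A^{-1}\big)=H^0\!\big(X_k,\cO_{X_k}(m)\otimes p_k^*A^{-1}\big)$. A positivity estimate for the Euler characteristic of $\cO_{X_k}(m)$, together with control of higher cohomology via the Morse inequalities, yields non-zero sections once $k\ge n$ and $d$ is large. The delicate point is to make the count effective and, crucially, uniform over $\bP^N$, so that the constructed jet differentials vary holomorphically in $t$; this uniformity is what feeds the next step. A natural device is to build these sections explicitly from the defining equation of $\cX$ together with Wronskian-type expressions, as in the constructions of Siu and Brotbek, which simultaneously renders the sections explicit and supplies the global objects needed on the universal family.

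For step (ii) I would invoke Siu's method of slanted (meromorphic) vector fields on the space of vertical $k$-jets of $\cX\to\bP^N$. Such vector fields, with controlled pole order in the jet directions and along the base, can be applied to a jet differential $P$; since $P(j_kf)=0$ identically, the curve also satisfies $(VP)(j_kf)=0$ for every admissible $V$. Iterating this differentiation strictly lowers the expected dimension of the common zero locus of the resulting family, and for general $t$ forces $f(\bC)$ into a proper subvariety $Y_t\subsetneq X_t$, i.e. algebraic degeneracy. Upgrading algebraic degeneracy to genuine Brody hyperbolicity then requires removing $Y_t$: either by arranging the section count so that the common zero locus is already empty, or by an induction on dimension exploiting that the degeneracy locus is itself an object of the same type.

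The main obstacle lies entirely in the numerics of the sharp bound $d\ge 2n-1$. The positivity threshold for the Euler characteristic in step (i) and the pole order of the slanted vector fields in step (ii) pull against each other: more jet differentials demand larger $d$, while keeping the vector fields globally defined with low pole order limits how much differentiation one may perform. Balancing these so that the common zero locus is emptied at exactly $d=2n-1$ lies far beyond the crude Morse-inequality estimates, which give only a degree bound polynomial (and of high degree) in $n$. Attaining linear growth, and in particular the optimal constant $2$ with additive $-1$, would require either a substantially sharper cohomology computation on the Demailly tower or an entirely explicit construction adapted to the geometry of $\bP^n$; this is precisely where the difficulty of the conjecture concentrates, and why at present only weaker effective bounds are known.
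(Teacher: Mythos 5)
The statement you are asked to prove is not a theorem of this paper at all: it is Conjecture~\ref{kobconj}, the Kobayashi conjecture, which the paper states as an open problem, surveys partial progress on (\cite{mcq99, deg00, rou07, pau08, dmr10, siu15, dem15, bro16, den16}), and never claims to prove. The paper's own contributions (Theorems~\ref{mainthm}--\ref{complement}) construct \emph{particular} hyperbolic hypersurfaces as cyclic covers via degeneration to the normal cone, which neither implies nor approaches the statement about a \emph{general} hypersurface of degree $2n-1$. So there is no ``paper's proof'' to compare against, and your proposal should be judged on its own terms as a proof attempt.

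As such, it has a genuine and decisive gap, which to your credit you name yourself in the final paragraph: the entire scheme --- fundamental vanishing for jet differentials, Morse-inequality section counts on the Demailly--Semple tower, Siu's slanted vector fields to differentiate and cut down the degeneracy locus --- is the known strategy, and it is known to yield hyperbolicity or algebraic degeneracy only for degree bounds that grow much faster than linearly in $n$ (the effective bounds in \cite{dmr10, siu15, dem15, den16} are polynomial of high degree or worse, and \cite{bro16} proves hyperbolicity only for unspecified large degree). Your step (i) does not produce the uniform, effective section count needed at $d=2n-1$; your step (ii) requires pole-order control on the vector fields that is incompatible with such a low degree under every known estimate; and the final upgrade from algebraic degeneracy to Brody hyperbolicity is itself unresolved at this level of effectivity. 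A proof sketch whose key quantitative step is acknowledged to be ``far beyond'' available techniques is a research program, not a proof: nothing in the proposal closes, or could close, the gap between the known thresholds and the linear bound $2n-1$, which is exactly why the statement remains a conjecture.
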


It is easy to see that Lang's 
conjecture follows from the Green-Griffiths-Lang conjecture by
a Noetherian induction argument.
Based on results by Clemens \cite{cle86}, Ein \cite{ein88, ein91}
and Xu \cite{xu94}, Voisin \cite{voi96} showed that a general
hypersurface $X$ of degree $\geq (2n-1)$ in $\bP^n$ with $n\geq 3$ satisfies that
every subvariety of $X$ is of general type. Therefore,
Lang's conjecture implies Conjecture \ref{kobconj} by Voisin's result. 

A lot of work has been done toward Conjecture \ref{kobconj}, see
\cite{mcq99, deg00, rou07, pau08, dmr10, siu15, dem15, bro16, den16}. 
Examples of hyperbolic hypersurfaces are constructed in 
\cite{mn96, sy97, fuj01, sz02, cz03, duv04, sz05, cz13, huy15, huy16}.


In this paper, we first study the hyperbolicity of cyclic covers. For curves we know that if the branch divisor has large degree, then the cyclic cover will be Brody hyperbolic for generic choice of the branch divisor. (See Section \ref{p1} for hyperbolicity of cyclic covers of $\bP^1$.) Our first main result is a higher dimensional generalization:

\begin{thm}\label{mainthm}
 Let $X$ be a smooth projective variety with $\dim X=n$. Let $L$ be a globally
 generated ample line bundle on $X$.
 Suppose that there exists a smooth hypersurface $H\in |L|$ that is Brody hyperbolic.
 Let $m,d\geq 2$ be positive integers such that $m$ is a multiple of $d$.
 For a generic small deformation $S$ of
 $mH\in |L^{\otimes m}|$, let $Y$ be the degree $d$ cyclic cover of $X$ branched along $S$. 
 Then $Y$ is Brody hyperbolic if $m\geq d\lceil \frac{n+2}{d-1} \rceil$.
\end{thm}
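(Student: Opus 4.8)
The plan is to show that any non-constant holomorphic map $f\colon\bC\to Y$ leads to a contradiction. Write $\pi\colon Y\to X$ for the cyclic cover and $k:=m/d$, so that $Y$ sits in the total space of $L^{\otimes k}$ as $\{t^{d}=\pi^{*}s\}$, where $t$ is the tautological section of $\pi^{*}L^{\otimes k}$ and $s\in H^{0}(X,L^{\otimes m})$ cuts out $S$. First I would record two elementary reductions. Since each fibre of $\pi$ is a finite set of at most $d$ points, a non-constant $f$ forces the projection $\psi:=\pi\circ f\colon\bC\to X$ to be non-constant. Moreover the tautological relation $t^{d}=\pi^{*}s$ shows that the pulled-back divisor $\psi^{*}S$ on $\bC$ equals $d\cdot\mathrm{div}(t\circ f)$; in particular $\psi$ meets $S$ only with multiplicities divisible by $d$ (hence $\geq d$) at every point.

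This reduces Theorem~\ref{mainthm} to two statements. (i) If $\psi(\bC)\subseteq S$, then $f$ factors through the ramification divisor $R=\pi^{-1}(S)_{\mathrm{red}}$, which maps isomorphically onto $S$; so it suffices to know that a generic small deformation $S$ of $mH$ is itself Brody hyperbolic. (ii) If $\psi(\bC)\not\subseteq S$, one must rule out a non-constant entire curve in $X$ touching $S$ to order $\geq d$ at every intersection point. Statement (i) I would obtain by the same deformation technique applied one dimension down (or by citing the companion complement/branch-divisor result advertised in the abstract): because $S$ degenerates to $mH$ and $H$ is Brody hyperbolic, a Brody-limit argument confines any limiting entire curve of $S$ into $H$, where the hyperbolicity of $H$ finishes the job. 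The heart of the theorem is therefore statement (ii).

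For (ii) the strategy is a degeneration-plus-genericity argument. Write $s=h^{m}+\sum_{j}a_{j}\sigma_{j}$, where $h\in H^{0}(L)$ defines $H$, the $\sigma_{j}$ form a basis of $H^{0}(L^{\otimes m})$, the $a_{j}$ are small and generic, and $h^{m}=(h^{k})^{d}$. Suppose toward a contradiction that such curves $\psi$ persist for generic $S$ near $mH$. Applying Brody's reparametrization lemma inside the fixed compactification $\bP(L^{\otimes k}\oplus\cO_{X})$ of the total space and letting $S\to mH$, I expect to extract a limiting entire curve in the central cover $Y_{0}=\{t^{d}=(h^{k})^{d}\}$; away from $H$ this cover splits into $d$ sheets $t=\zeta\,h^{k}$ with $\zeta^{d}=1$, each isomorphic to $X\setminus H$. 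A limit curve therefore either lands in $H$, which is excluded by hyperbolicity of $H$, or stays on one sheet, which must be eliminated by the tangency condition. Here the perturbation enters: for generic $(a_{j})$ the section $s$ is not a $d$-th power, so the sheets do not survive, and differentiating the relation $(t\circ f)^{d}=s\circ\psi$ repeatedly produces independent algebraic relations among $\psi$ and its derivatives that are pulled back from $X$. The numerical hypothesis $m\geq d\lceil\tfrac{n+2}{d-1}\rceil$, i.e.\ $k(d-1)\geq n+2$, is precisely what furnishes enough such relations (equivalently, enough jet differentials with the needed positivity): one may differentiate the relation $k$ times, gaining order $d-1$ at each step, so $k(d-1)$ must exceed $\dim X=n$, with the extra $+2$ absorbing the reparametrization parameter and a strict inequality. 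This forces $\psi$ into a proper subvariety built from $H$, contradicting $\psi(\bC)\not\subseteq S$.

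The main obstacle I anticipate is step (ii), specifically making the genericity count rigorous and reconciling it with the deformation. One must show that for a generic perturbation $(a_{j})$ the conditions imposed by ``meeting $S$ to order $\geq d$'' have no solutions other than curves forced into $H$, and that this property is open and holds on a full Euclidean neighbourhood of $mH$, so that the conclusion survives the passage to a single generic $S$ rather than merely to a Brody limit. The delicate analytic point is the behaviour near $H$, where the $d$ sheets of $Y_{0}$ come together and where $t\circ f$ may vanish; controlling the limit curve there, and matching the Brody-limit mechanism (which only yields hyperbolicity for $S$ in some neighbourhood) with the algebraic genericity (which removes the finitely many entire curves created by the perturbation), is where the two halves of the argument must be carefully aligned.
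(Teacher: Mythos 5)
Your reduction to cases (i) and (ii) is reasonable, and case (i) can indeed be outsourced to the companion statement on the hyperbolicity of $S$ itself (Theorem \ref{complement}, whose hypothesis $m\ge n+2$ follows from $m\ge d\lceil\frac{n+2}{d-1}\rceil$). But case (ii), which you correctly identify as the heart of the matter, is not proved, and the degeneration you choose for it cannot close as stated. You degenerate $S\to mH$ while keeping the base $X$ fixed, so the central cover $Y_0=\{t^d=(h^k)^d\}$ breaks into $d$ sheets $t=\zeta h^k$, each a copy of $X$ meeting the others along (a thickening of) $H$. A Brody limit curve that stays on one sheet lives in $X\setminus H$, and Theorem \ref{mainthm} makes no hyperbolicity assumption on $X\setminus H$ --- that extra hypothesis is precisely what distinguishes Theorem \ref{mainthm2} --- so this configuration cannot be excluded by your limit argument, and the appeal to ``the tangency condition'' plus generic jet differentials remains a heuristic. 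Your reading of $m\ge d\lceil\frac{n+2}{d-1}\rceil$ as ``differentiate $k$ times, gaining order $d-1$ each time'' is numerology that is never converted into actual jet differentials with the required positivity, and nothing in the sketch controls the behaviour near $H$ where the sheets collide. You flag these difficulties yourself; they are not technicalities but the entire content of the theorem.

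The paper avoids all of this by degenerating the base as well: it uses the degeneration of $X$ to the normal cone of $H$ (blow up $X\times\bA^1$ along $H\times\{0\}$, then contract via the ample model), so the special fibre $X_0$ is a cone over $H$ and the special cyclic cover $Y_0$ fibres over the Brody hyperbolic $H$ with one-dimensional fibres. Each fibre is a degree $d$ cyclic cover of $\bP^1$ branched along the restriction of the branch divisor to a ruling; a dimension count (Lemma \ref{tangentdim}) shows that for generic $T$ this restriction is supported at more than $m-n$ points, and a Riemann--Hurwitz computation (Lemma \ref{fiberhyp1}) shows such covers of $\bP^1$ are hyperbolic once the support has at least $\frac{m}{d}+3$ points, which is exactly what $m\ge d\lceil\frac{n+2}{d-1}\rceil$ guarantees. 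Hyperbolicity of $Y_0$ follows from hyperbolicity of base and fibres, and Theorem \ref{deformation} transfers it to nearby $Y_t$, which realize the cyclic covers branched along generic small deformations of $mH$. To salvage your outline you would need to replace your degeneration by this one; as written, step (ii) is a genuine gap.
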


Here the assumption that $H$ being Brody hyperbolic is crucial for our discussion.
If $X\setminus H$ is also Brody hyperbolic, we have slightly better lower bounds on $m$ to settle the hyperbolicity of $Y$:

\begin{thm}\label{mainthm2}
 With the notation of Theorem \ref{mainthm}, assume in addition that
 $X\setminus H$ is Brody hyperbolic. Then $Y$ is Brody
hyperbolic if $m\geq d\lceil\frac{n+1}{d-1}\rceil$.
\end{thm}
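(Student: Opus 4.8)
The plan is to transfer the problem from the cover $Y$ down to entire curves on the base $X$ relative to the two divisors $H$ and $S$, and then to exploit the two hyperbolicity hypotheses stratum by stratum on a degeneration of $Y$ to the case $S=mH$. Realize $Y$ inside the total space of $L^{\otimes k}$, where $m=dk$, as the hypersurface $\{t^{d}=\pi^{*}s\}$, with $s\in H^{0}(X,L^{\otimes m})$ a defining section of $S$ and $t$ the tautological section of $\pi^{*}L^{\otimes k}$. Given a non-constant holomorphic map $f\colon\bC\to Y$, I would first observe that $g:=\pi\circ f$ is non-constant, since otherwise $f$ maps $\bC$ into a finite fibre of the finite morphism $\pi$ and is therefore constant. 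Trivializing $g^{*}L^{\otimes k}$ over $\bC$ and setting $\tau:=f^{*}t$, the defining equation becomes the single scalar relation $\tau^{d}=g^{*}s$ between entire functions. This already forces a dichotomy: either $\tau\equiv 0$, i.e. $f(\bC)\subseteq R$, where $R=\{t=0\}\cap Y\cong S$ is the ramification divisor; or $\tau\not\equiv 0$, in which case $g(\bC)\not\subseteq S$ and every zero of $g^{*}s$ has order divisible by $d$, so the non-constant curve $g$ meets $S$ only with multiplicities divisible by $d$.

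The first alternative is quickly dispatched: since $R\cong S$, it suffices that the branch divisor $S$ itself be Brody hyperbolic. For a generic small deformation of $mH$ in the stated range this should follow from the hyperbolicity of $H$ together with the complement results of this paper, and I would isolate it as a preliminary lemma so that only the second alternative remains.

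For the main case I would run the degeneration that underlies Theorem \ref{mainthm}, now reading off the two strata separately. Let $S_{b}\to S_{0}=mH$ as $b\to 0$ in the space of small deformations, with defining sections $s_{b}\to s_{0}=h^{m}=(h^{k})^{d}$ and associated covers $Y_{b}\to X$ forming a family $\cY\to B$; a Brody reparametrization of hypothetical non-constant curves on $Y_{b}$ as $b\to 0$ yields a non-constant limit $f_{\infty}\colon\bC\to Y_{0}$. Because $s_{0}=(h^{k})^{d}$, the central fibre is the reducible union $Y_{0}=\bigcup_{j=0}^{d-1}X_{j}$ with $X_{j}=\{t=\zeta^{j}\pi^{*}h^{k}\}\cong X$ (here $\zeta=e^{2\pi i/d}$), the components being glued along the copy of $H$ sitting in the zero section. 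Passing to the limit in $\tau^{d}=g^{*}s$ gives $\tau_{\infty}^{d}=(g_{\infty}^{*}h^{k})^{d}$, so by connectedness of $\bC$ the limit lands in a single component $X_{j}\cong X$. On this component the two hypotheses act on complementary strata: a limit curve contained in the gluing locus is constant because $H$ is Brody hyperbolic, while a limit curve disjoint from it is constant because $X\setminus H$ is Brody hyperbolic.

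This is exactly where the improvement over Theorem \ref{mainthm} enters: with $H$ alone one must spend an extra unit of vanishing to rule out limit curves that meet the gluing locus $H$ without lying in it, which is what forces $m\ge d\lceil(n+2)/(d-1)\rceil$ there, whereas the hyperbolicity of $X\setminus H$ lets the open stratum be handled directly and relaxes the bound to $m\ge d\lceil(n+1)/(d-1)\rceil$. The step I expect to be the main obstacle is precisely the control of how the limit entire curve meets $H$: one must rule out a non-constant curve that lies in a single component $X_{j}\cong X$ yet crosses the gluing locus $H$ transversally, for which neither hyperbolicity hypothesis applies directly. Overcoming this requires using the genericity of the deformation beyond first order together with the positivity encoded in $k(d-1)\ge n+1$, and ensuring the Brody limit is genuinely non-constant across the degenerating, singular central fibre; this quantitative input is the technical heart of the argument.
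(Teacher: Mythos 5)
Your overall strategy---degenerate the cover, extract a non-constant Brody limit in the central fibre, and kill it stratum by stratum using hyperbolicity of $H$ and of $X\setminus H$---is in the same spirit as the paper's, but you chose the naive degeneration (move the branch divisor $S_b\to mH$ inside $|L^{\otimes m}|$, so that the central fibre is $Y_0=\bigcup_{j=0}^{d-1}X_j$, $d$ copies of $X$ glued along $H$), and this is exactly where the argument breaks. The stratification step ``the limit curve either lies in a given component or misses it'' is an application of the generalized Hurwitz theorem, and it requires each component of the central fibre to be a \emph{Cartier} divisor on the total space: this is precisely hypothesis (1) of the paper's Theorem \ref{sz}, and it fails for your family. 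Indeed, writing $\cY=\{t^{d}=h^{m}+bF+O(b^{2})\}$, the total space is singular along $\{t=h=b=F=0\}$, i.e.\ over the locus $H\cap\{F=0\}$, which is nonempty for $n\geq2$ no matter how the one-parameter family is chosen (the leading term $F$ restricts on $H$ to a section of an ample line bundle, hence must vanish). Since each $X_j\cong X$ is smooth while $\cY$ is singular at these points of $X_j$, the component $X_j$ cannot be Cartier there (a Cartier divisor that is smooth at a point forces the ambient space to be smooth at that point); for $d=2$ the local model is the conifold $uv=bF$ with $X_j=\{u=b=0\}$ a classical non-$\mathbb{Q}$-Cartier plane, and in that model one can exhibit explicit curves in nearby fibres whose limits lie in one component and \emph{cross} the other at the singular point. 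So the case you flag as ``the main obstacle''---a non-constant limit curve in $X_j\cong X$ meeting $H$ without lying in it---is not a removable technicality: it genuinely occurs, and neither ``genericity beyond first order'' nor the inequality $k(d-1)\geq n+1$ is actually brought to bear on it anywhere in your outline. A telling symptom: the bound $m\geq d\lceil(n+1)/(d-1)\rceil$ never enters your stratum analysis (only $m\geq n+2$ is used, to make $S$ hyperbolic), so a completed proof along your lines would prove, say, Brody hyperbolicity of the double cover of $\bP^2$ branched along a generic small deformation of $4D$ with $D$ a Zaidenberg quintic (branch degree $20$), strictly below the threshold $30$ of the paper's Theorem \ref{ccp2} and inside the range \cite{rr13} that the paper explicitly presents as conjectural.

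The paper's proof of Theorem \ref{mainthm2} (via Theorem \ref{refined2}) circumvents exactly this point by choosing a different degeneration: it degenerates $X$ itself to the normal cone, blowing up $H\times\{0\}$ in $X\times\bA^1$ to get $\tcX$, and takes the cyclic cover $\tcY$ of this \emph{smooth} total space branched along a general divisor $\tphi^{*}(T)$ disjoint from the strict transform $\hX_0$. Then $\tcY$ is smooth, every component of $\tY_0$---the cover $\oY_0$ of the $\bP^1$-bundle $E\to H$ together with $d$ disjoint copies $\hX_{0,i}$ of $X$---is Cartier, and Theorem \ref{sz} applies legitimately to the strata $\oY_0\setminus\bigcup_i\hX_{0,i}$, $\hX_{0,i}\setminus\oY_0\cong X\setminus H$, and $\oY_0\cap\hX_{0,i}\cong H$. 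The numerical hypothesis is then spent in a place entirely absent from your outline: by the dimension count of Lemma \ref{tangentdim}, a general branch divisor meets every ruling of $E$ in at least $m-n+1$ distinct points, and Lemma \ref{fiberhyp2} makes each fibre of $\oY_0\setminus\bigcup_i\hX_{0,i}\to H$ (a degree $d$ cyclic cover of $\bA^1$) Brody hyperbolic exactly when $m-n+1\geq\frac{m}{d}+2$ for $d$ even, resp.\ $\frac{m}{d}+1$ for $d$ odd, which is the stated bound. Relatedly, your explanation of the gap between the bounds in Theorems \ref{mainthm} and \ref{mainthm2} is not the paper's: the difference between $n+2$ and $n+1$ is the difference between needing hyperbolicity of cyclic covers of the compact ruling $\bP^1$ (Lemma \ref{fiberhyp1}, used when $X\setminus H$ is not assumed hyperbolic) and of the affine ruling $\bA^1$ (Lemma \ref{fiberhyp2}), the point at infinity of each ruling being absorbed by the strata $X\setminus H$ and $H$.
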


In fact, we prove stronger results from which Theorem \ref{mainthm} and \ref{mainthm2} follow (see Theorem
\ref{refined1} and \ref{refined2}).  
\medskip

The following theorem is an application of Theorem \ref{mainthm} which gives new examples of Brody hyperbolic hypersurfaces in $\bP^{n+1}$. These hypersurfaces are cyclic covers of $\bP^n$ via linear projections.

\begin{thm}\label{cyclichyp}
Suppose $D$ is a smooth hypersurface of degree $k$ in $\bP^n$ that is Brody hyperbolic. For $d\geq n+3$ and a generic small deformation $S$ of $d D$ in $|\cO_{\bP^n}(dk)|$, let $W$ be the degree $dk$ cyclic cover of $\bP^n$ branched along $S$. Then $W$ is a Brody hyperbolic hypersurface in $\bP^{n+1}$ of degree $dk$.
\end{thm}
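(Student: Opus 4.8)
The plan is to deduce the theorem from Theorem \ref{mainthm}, applied not to $W$ itself but to an auxiliary cyclic cover of lower degree. One cannot feed $W$ directly into Theorem \ref{mainthm}: realizing the degree $dk$ hypersurface $W=\{x_{n+1}^{dk}=F\}\subset\bP^{n+1}$ as a cyclic cover of $\bP^n$ uses $\cO_{\bP^n}(1)$, whose members are hyperplanes and hence never Brody hyperbolic, so the crucial hypothesis of Theorem \ref{mainthm} cannot be met this way. Instead I would exploit the factorization $dk=d\cdot k$ and apply Theorem \ref{mainthm} with $X=\bP^n$, $L=\cO_{\bP^n}(k)$, $H=D$ (a smooth Brody hyperbolic member of $|L|$), covering degree $d$, and $m=d$. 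Then $m$ is a multiple of $d$, $mH=dD\in|L^{\otimes m}|=|\cO_{\bP^n}(dk)|$, and the numerical hypothesis $m\geq d\lceil\frac{n+2}{d-1}\rceil$ collapses to $\lceil\frac{n+2}{d-1}\rceil\leq 1$, i.e. to $d\geq n+3$. Hence for the given generic small deformation $S$ of $dD$, the degree $d$ cyclic cover $Y$ of $\bP^n$ with respect to $\cO_{\bP^n}(k)$, branched along $S$, is Brody hyperbolic.

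Next I would exhibit $W$ as a cyclic cover of $Y$. Writing $S=\{F=0\}$ with $F$ homogeneous of degree $dk$ and setting $u:=x_{n+1}^k$, the equation $x_{n+1}^{dk}=F$ becomes $u^d=F$, which is the defining equation of $Y$. Thus the assignment $x_{n+1}\mapsto x_{n+1}^k$ induces a morphism $\pi\colon W\to Y$, concretely the restriction to $W$ of the fibrewise $k$-th power map from the total space of $\cO_{\bP^n}(1)$ to that of $\cO_{\bP^n}(k)$. Since $F$ has positive degree, $W$ misses the projection center $[0:\cdots:0:1]$, so the linear projection realizes $W$ as the degree $dk$ cover of $\bP^n$ and is everywhere defined; as $W$ and $Y$ are finite over $\bP^n$ of degrees $dk$ and $d$ respectively, $\pi$ is finite of degree $k$.

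Finally, hyperbolicity transfers along $\pi$: a nonconstant holomorphic map $g\colon\bC\to W$ would yield $\pi\circ g\colon\bC\to Y$, which is constant because $Y$ is Brody hyperbolic, so $g(\bC)$ lies in a single finite fibre of $\pi$ and $g$ is constant, a contradiction. Hence $W$ is Brody hyperbolic. Smoothness of $W$ is automatic for generic $S$: the projective Jacobian criterion applied to $x_{n+1}^{dk}-F$ forces any singular point to satisfy $x_{n+1}=0$ and $\partial_i F=0$ for all $i\leq n$, i.e. to be a singular point of $S$, so we may simply impose smoothness of $S$ alongside the genericity already needed to make $Y$ hyperbolic. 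The one step demanding care, and the crux of the argument, is the factorization of the covering map in the previous paragraph: checking that $x_{n+1}\mapsto x_{n+1}^k$ defines a genuine finite morphism $W\to Y$ globally (including over $S$, where $\pi$ is totally ramified along $\{x_{n+1}=0\}$), since it is only through this auxiliary cover that the Brody hyperbolic divisor $D$ enters and Theorem \ref{mainthm} becomes applicable.
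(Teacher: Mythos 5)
Your proposal is correct and follows essentially the same route as the paper's own proof (Theorem \ref{hcc}(1)): apply Theorem \ref{mainthm} with $(X,L,H,d,m)=(\bP^n,\cO(k),D,d,d)$, so the numerical hypothesis reduces to $d\geq n+3$, and then transfer hyperbolicity to $W$ through the finite surjective morphism $W\to Y$. Your explicit verification of that morphism via $x_{n+1}\mapsto x_{n+1}^k$ and of the smoothness of $W$ merely fills in details the paper leaves implicit.
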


In \cite{rr13}, Roulleau-Rousseau showed that a double cover of
$\bP^2$ branched along a very general curve of degree at least
$10$ is algebraically hyperbolic. Thus Green-Griffiths-Lang conjecture predicts that these surfaces are also Brody hyperbolic. As an application of Theorem \ref{mainthm2}, we give some evidence supporting this prediction:

\begin{thm}\label{ccp2}
Let $l\geq 3$, $k\geq 5$ be two positive integers. Let $D$ be a smooth plane curve of degree $k$ such that $\bP^2\setminus D$ is Brody hyperbolic. (The existence of such $D$ was shown by Zaidenberg in \cite{zai89}.) Let $S$ be a generic small deformation of $2l D$. Then the double cover of $\bP^2$ branched along $S$ is Brody hyperbolic.
\end{thm}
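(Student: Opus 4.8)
The plan is to deduce this statement as a direct specialization of Theorem \ref{mainthm2}, so the bulk of the work is choosing the parameters correctly and checking the hypotheses. I would set $X = \bP^2$ (so $n = 2$) and take $L = \cO_{\bP^2}(k)$, which is globally generated and ample since $k \geq 5 > 0$. I put $d = 2$ (a double cover) and $m = 2l$. Then $S$ is a generic small deformation of $mH = 2lD$, which lives in $|L^{\otimes m}| = |\cO_{\bP^2}(2lk)|$, and the double cover of $\bP^2$ branched along $S$ is precisely the degree $d = 2$ cyclic cover $Y$ appearing in Theorem \ref{mainthm2}. Since $m = 2l$ is a multiple of $d = 2$ and $m = 2l \geq 6 \geq 2$, the divisibility and positivity requirements on $m$ and $d$ are satisfied.

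Next I would verify the two Brody hyperbolicity hypotheses. The extra hypothesis of Theorem \ref{mainthm2}, that $X \setminus H = \bP^2 \setminus D$ is Brody hyperbolic, is exactly the assumption placed on $D$ in the statement, and its realizability is guaranteed by Zaidenberg's construction \cite{zai89}. For the base hypothesis of Theorem \ref{mainthm}, I need $H = D$ itself to be Brody hyperbolic. Here I would invoke the genus formula for a smooth plane curve of degree $k$, namely
\[
g = \binom{k-1}{2} = \frac{(k-1)(k-2)}{2}.
\]
For $k \geq 5$ this gives $g \geq 6 \geq 2$, so $D$ is a compact Riemann surface of genus at least $2$ and is therefore Brody hyperbolic. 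Thus both the hypothesis of Theorem \ref{mainthm} and the additional hypothesis of Theorem \ref{mainthm2} hold.

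Finally I would check the numerical bound. With $n = 2$ and $d = 2$, the inequality demanded by Theorem \ref{mainthm2} reads
\[
m \geq d \left\lceil \frac{n+1}{d-1} \right\rceil = 2 \left\lceil \frac{3}{1} \right\rceil = 6,
\]
and since $m = 2l$ with $l \geq 3$ we indeed have $m \geq 6$. Theorem \ref{mainthm2} then applies verbatim and yields that $Y$, the double cover of $\bP^2$ branched along $S$, is Brody hyperbolic, which is the assertion.

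Because the statement is a straight application of the general theorem, there is no genuine analytic obstacle; the entire content is the correct matching of parameters and the verification of hypotheses. The single step requiring a short independent argument is the Brody hyperbolicity of $D$, which I handle via the genus bound above, and it is worth remarking that the hypothesis $l \geq 3$ is exactly what makes $m \geq 6$ hold, while $k \geq 5$ ensures both the genus bound for $D$ and the availability of Zaidenberg's examples in \cite{zai89}.
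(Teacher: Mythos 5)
Your proposal is correct and is exactly the paper's proof: the paper simply applies Theorem \ref{mainthm2} to $(X,L,H,d,m)=(\bP^2,\cO(k),D,2,2l)$. Your explicit verification of the hypotheses—in particular that $D$ is Brody hyperbolic because its genus $(k-1)(k-2)/2\geq 6$ for $k\geq 5$, and that $m=2l\geq 6=2\lceil 3/1\rceil$—fills in details the paper leaves implicit but adds nothing beyond the same route.
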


Note that the minimal degree of $S$ is $30$.

\medskip

For hyperbolicity of complements, the logarithmic Kobayashi 
conjecture and related problems have been studied in
\cite{gre77, zai87, zai89, sy96, sz00, eg03, rou07b, rou09, it15}.

The cyclic cover being Brody hyperbolic clearly implies that the complement of the branch locus is also Brody hyperbolic. More precisely, with the notation of Theorem \ref{mainthm} we have that $X\setminus S$ is Brody hyperbolic if $m\geq d\lceil \frac{n+2}{d-1} \rceil$. In fact, we can still reach the same conclusion with the slightly weaker condition $m\geq n+2$, as the following theorem states.

\begin{thm}\label{complement}
Let $X$ be a smooth projective variety with $\dim X=n$. Let $L$ be a globally
 generated ample line bundle on $X$.
 Suppose that there exists a smooth hypersurface $H\in |L|$ that is Brody hyperbolic.
 Let $m\geq n+2$ be a positive integer. Then
 for a generic small deformation $S$ of
 $mH\in |L^{\otimes m}|$, both $S$ and $X\setminus S$ are Brody hyperbolic.
 Moreover, $X\setminus S$ is complete hyperbolic and hyperbolically
 embedded in $X$.
\end{thm}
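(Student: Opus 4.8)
The plan is to separate the statement into its two Brody-hyperbolicity assertions and then deduce complete hyperbolicity and hyperbolic embedding from a standard criterion. Throughout I write $S=\{\sigma=0\}$ with $\sigma=\tau^m+\eta$, where $\tau\in H^0(X,L)$ cuts out $H$ and $\eta\in H^0(X,L^{\otimes m})$ is the small generic deformation term; set $E=\{\eta=0\}$. For a generic small deformation $S$ is smooth, and $S$, $H$, $E$ will meet the auxiliary loci appearing below transversally. For the last sentence of the theorem I would invoke the criterion of Green (see Kobayashi's book): for compact $X$ and a smooth hypersurface $S$, the complement $X\setminus S$ is complete hyperbolic and hyperbolically embedded in $X$ as soon as there is no nonconstant holomorphic map $f\colon\bC\to X$ whose image lies entirely in $X\setminus S$ or entirely in $S$, i.e. as soon as both $X\setminus S$ and $S$ are Brody hyperbolic. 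Thus everything reduces to ruling out nonconstant entire curves in $X\setminus S$ and in $S$. (The weaker bound $m\ge d\lceil\frac{n+2}{d-1}\rceil$ for Brody hyperbolicity of $X\setminus S$ is already immediate from Theorem \ref{mainthm}, since a curve in the complement lifts to the cyclic cover; the point of the present statement is the sharper linear bound $m\ge n+2$, which I would obtain by arguing directly in the logarithmic setting.)

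For $X\setminus S$, suppose $f\colon\bC\to X\setminus S$ is nonconstant. If $f(\bC)\subset H$, then $f$ factors through the Brody hyperbolic $H$ and must be constant, a contradiction; so I may assume $\tau(f)\not\equiv 0$. Trivializing $f^{*}L$ over the Stein manifold $\bC$, the sections $\sigma,\tau^m,\eta$ pull back to entire functions obeying the relation $\sigma(f)=\tau(f)^m+\eta(f)$, in which $\sigma(f)$ is nowhere zero because $f$ avoids $S$. The engine is Borel's lemma: once $f$ is known to avoid $H$ and $E$ as well, the three summands are units, no proper subsum can vanish, and Borel forces $\tau(f)^m/\sigma(f)$ to be constant, so that $f$ maps into a proper member $\{\tau^m=c\,\sigma\}$ of $|L^{\otimes m}|$; an induction on $\dim X$ then finishes. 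Here the geometry near $H$ is what makes the degree enter: writing $\tau^m=-\eta$ shows that $S$ consists, in a transverse neighborhood of $H$, of $m$ distinct sheets $\tau\approx\zeta_j(-\eta)^{1/m}$ ($\zeta_j^m=1$), and it is these $m$ sheets, together with $H$ and $E$, that supply the $n+2$ effective divisors in general position demanded by the degenerate form of Cartan's theorem; the hypothesis $m\ge n+2$ is exactly what guarantees enough of them. Genericity of $\eta$ is used precisely to put these sheets, $H$, and $E$ in general position.

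The Brody hyperbolicity of $S$ is handled by the same mechanism. For a nonconstant $g\colon\bC\to S$ I again split on whether $g(\bC)\subset H$: the inclusion case is killed by hyperbolicity of $H$, while otherwise the defining relation reads $\tau(g)^m=-\eta(g)$ on $S$, and the genericity of $\eta$ together with Borel's lemma (applied after discarding the discrete fibers of $g$ over $H$ and $E$) forces $g$ into a proper subvariety of $S$, whence one concludes by induction on dimension. Combining the two hyperbolicity statements with Green's criterion then yields that $X\setminus S$ is complete hyperbolic and hyperbolically embedded in $X$, completing the proof.

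I expect the main obstacle to be the passage from the clean situation ``$f$ avoids $H$ and $E$'' to the general case. A priori the curve may meet $H$ and $E$, so $\tau(f)^m$ and $\eta(f)$ need not be units and Borel's lemma does not apply verbatim; the zeros of $\tau(f)$ at $f^{-1}(H)$ are the crux. Controlling them is exactly where both the genericity of the deformation and the numerical bound $m\ge n+2$ must be spent: either through a second-main-theorem estimate showing that the counting functions for the intersections with $H$ and $E$ are dominated (so that the $m$-fold vanishing of $\tau(f)^m$ along $f^{-1}(H)$ renders that locus negligible), or through a local general-position analysis of the $m$ sheets of $S$ near $H$ that forces the curve to be linearly degenerate. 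Making this reduction precise, and checking that the required transversality holds for a generic $S$, is the technical heart of the argument.
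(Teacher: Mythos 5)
Your reduction to Green's criterion (Theorem \ref{greenhoward}) matches the paper, but the core of your argument has a genuine gap --- one you yourself flag as ``the technical heart,'' and it is precisely the part that constitutes the theorem. The Borel-lemma engine only runs once all three entire functions $\tau(f)^m$, $\eta(f)$, $\sigma(f)$ are nowhere vanishing, i.e.\ once $f$ is known to avoid $H$ and $E$ as well as $S$; nothing in the hypotheses prevents a curve in $X\setminus S$ from meeting $H$ and $E$, and your two proposed repairs do not close this. The ``$m$ sheets $\tau\approx\zeta_j(-\eta)^{1/m}$'' are multivalued local analytic branches, not global divisors on $X$, so the degenerate Cartan/Borel theorem for $n+2$ hypersurfaces in general position cannot be applied to them; and the second-main-theorem estimate you allude to is not formulated, let alone proved. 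Moreover, even in the clean case the argument is incomplete: from $\tau(f)^m=c\,\sigma(f)$ you place $f$ inside the pencil member $\{(1-c)\tau^m=c\,\eta\}$, but your ``induction on $\dim X$'' has no inductive hypothesis that applies to it --- the theorem's hypothesis concerns a hyperbolic $H\in|L|$, not hyperbolicity of members of $|L^{\otimes m}|$, and a generic pencil member is not a generic small deformation of $mH$. Finally, for curves in $S$ the defining relation $\tau(g)^m=-\eta(g)$ has only \emph{two} terms, and Borel's lemma says nothing about a two-term identity, so your treatment of the hyperbolicity of $S$ has no engine at all.

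For contrast: the paper proves this statement without any value-distribution theory. It degenerates $(X,S_t)$ to the projective cone $X_0$ over $H$ via the deformation to the normal cone (Section \ref{family}); on the cone, both the special divisor and its complement fiber over the Brody hyperbolic base $H$ by the ruling $\rho_E$, so hyperbolicity reduces, generator by generator, to the statement that $\bP^1$ minus at least $3$ points is hyperbolic --- this is exactly where $m\geq n+2$ enters, through the dimension count of Lemma \ref{tangentdim} guaranteeing that a generic $T$ meets every generator in $\geq m-n+1\geq 3$ points. Hyperbolicity of the divisor itself on the cone is immediate since it maps finitely to $H$. The conclusion for nearby fibers $(X,S_t)$ then follows from the limit-of-Brody-curves deformation theorems (Theorems \ref{deformation} and \ref{def-comp}), with Green--Howard giving the hyperbolic embeddedness. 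If you want to salvage your approach, you would essentially be reproving results of the Green/Siu--Yeung/Zaidenberg type on complements, which is a substantially harder route than the degeneration argument.
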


Recall that \textit{complete hyperbolicity} is defined in \cite[p. 60]{kob98} and \textit{hyperbolically embeddedness} is defined in \cite[p. 70]{kob98}.

\subsection*{Structure of the paper}
The proofs of the theorems are mostly based on the degeneration to the normal cone (Section \ref{normalcone}) and deformation type theorems of hyperbolicity (Theorem \ref{deformation}, \ref{sz} and \ref{def-comp}). 

In Section \ref{family}, we construct a family $\cX\to\bA^1$ with the general fiber $X_t$ isomorphic to $X$ and the special fiber $X_0$ being a projective cone over $H$. For technical reasons, we first 
introduce a smooth model $\tcX$ of $\cX$ (Proposition \ref{tcX}). Then by taking a cyclic cover of the total space $\cX$, we get a family of cyclic covers $Y_t$ of $X$ which degenerates to a cyclic cover $Y_0$ of $X_0$ (Proposition \ref{tcY}). 

Most of the theorems are proved in Section \ref{bigpf}. As a preparation, we study the hyperbolicity of cyclic covers of $\bP^1$ and $\bA^1$ in Section \ref{p1}. We give lower bounds on the size of the reduced branch loci to get hyperbolicity of those cyclic covers (Lemma \ref{fiberhyp1} and \ref{fiberhyp2}).
Section \ref{pf} is devoted to proving Theorem \ref{mainthm}, \ref{mainthm2} and \ref{complement}. By dimension counting, we give a lower bound for the size of the reduced branch loci among all generators of $X_0$ (Lemma \ref{tangentdim}). When the branch locus has large degree, we prove the hyperbolicity of the cyclic cover of each generator of $X_0$,
which gives the hyperbolicity of $Y_0$ since the base $H$ is also Brody hyperbolic. Then Theorem \ref{mainthm} follows by applying a deformation type theorem (see Theorem \ref{deformation}). Theorem \ref{mainthm2} and \ref{complement} are proved in similar ways with minor changes.

We first apply our methods to hypersurfaces in $\bP^n$ in Section \ref{ex-hyp}.
We prove a stronger result that essentially implies Theorem 
\ref{cyclichyp} (see Theorem \ref{hcc}). We give a new proof
to the main result in \cite{zai09} by applying Mori's degeneration
method (see Theorem \ref{zaicone}). We also improve
\cite[p. 147, Corollary of Theorem II.2]{zai92} (see Theorem 
\ref{hypersurface}). In Section \ref{ex-surface}, we apply our methods to surfaces. We prove Theorem \ref{ccp2}. We also obtain hyperbolicity of the complement of a smooth curve in some polarized K3 surface of large degree (Example \ref{k3}).

\subsection*{Notation}
Throughout this paper, we work over the complex numbers $\bC$. We will follow the 
terminology of \cite{kob98} for various notions of hyperbolicity.

\subsection*{Acknowledgement}
I would like to thank my advisor J\'anos Koll\'ar for his constant support, encouragement and many inspiring conversations.
I wish to thank Chi Li, Charles Stibitz, Amos Turchet and
Anibal Velozo for many helpful discussions, and Gang Tian, 
Xiaowei Wang and Chenyang Xu for their interest and encouragement.
I also wish to thank Mikhail Zaidenberg for his helpful comments 
through e-mails.

\section{Construction of Families}\label{family}

\subsection{Degeneration to the normal cone}\label{normalcone}

From now on $X$ will be a 
smooth projective variety of dimension $n$. 
Let $L$ be a globally generated ample line
bundle on $X$. Let $H$ be a smooth hypersurface in $|L|$.

Let $\rho:\tcX\to X\times\bA^1$ be the blow up of $X\times\bA^1$ along $H\times\{0\}$, with exceptional divisor $E$.
Denote the two projections from $X\times\bA^1$ by $p_1$ and $p_2$. The line bundle $\tcL$ on $\tcX$ is defined by
\[
\tcL:= (p_1\circ\rho)^* L\otimes \cO_{\tcX}(-E).
\]
Let $\tpi:=p_2\circ\rho$ be the composite of the projections $\tcX\to X\times\bA^1\to\bA^1$.

\begin{prop}
With the above notation, the line bundle $\tcL$ is globally generated.
\end{prop}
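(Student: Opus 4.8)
The plan is to realize $\tcL$ as a quotient of the pullback of a globally generated sheaf from $X\times\bA^1$. Recall that for the blow-up $\rho\colon\tcX\to X\times\bA^1$ along the center $Z:=H\times\{0\}$, the exceptional divisor satisfies $\cO_{\tcX}(-E)=\rho^{-1}\cI_Z\cdot\cO_{\tcX}$ (the inverse image ideal sheaf of the center), so there is a canonical surjection $\rho^*\cI_Z\twoheadrightarrow\cO_{\tcX}(-E)$. Tensoring this surjection with the line bundle $\rho^*p_1^*L$ gives a surjection
\[
\rho^*\bigl(\cI_Z\otimes p_1^*L\bigr)\twoheadrightarrow \cO_{\tcX}(-E)\otimes (p_1\circ\rho)^*L=\tcL .
\]
Since the pullback of a globally generated sheaf is globally generated, and any quotient of a globally generated sheaf is globally generated, it suffices to prove that $\cI_Z\otimes p_1^*L$ is globally generated on $X\times\bA^1$.

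To do this I would exhibit explicit global generators. Let $t$ be the coordinate on $\bA^1$ and let $s_H\in H^0(X,L)$ be a section with $H=\{s_H=0\}$. Because $Z=(H\times\bA^1)\cap(X\times\{0\})$ is a transverse scheme-theoretic intersection of two smooth divisors, one has locally $\cI_Z=(f,t)$ with $f$ a local equation of $H$; globally this reads $\cI_Z=p_1^*\cI_H+(t)$. Accordingly I take as global sections of $\cI_Z\otimes p_1^*L$ the section $p_1^*s_H$, which vanishes on $H\times\bA^1\supset Z$, together with all sections $t\cdot p_1^*\sigma$ for $\sigma\in H^0(X,L)$, which vanish on $X\times\{0\}\supset Z$.

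It then remains to check that these sections generate the stalk at every point $q=(x,a)$. Away from the center, that is when $a\neq 0$ or $x\notin H$, the sheaf $\cI_Z\otimes p_1^*L$ is just $p_1^*L$ near $q$, and global generation follows from that of $L$: if $a\neq 0$ one uses $t$ as a local unit together with the $\sigma$'s, and if $a=0$ with $x\notin H$ one uses $p_1^*s_H$, which is nonvanishing there. The delicate point, which I expect to be the main obstacle, is the behavior at points of $Z$, where $\cI_Z$ fails to be locally free. There I would trivialize $L$ near $x$ by choosing, using the global generation of $L$, a section $\sigma_0\in H^0(X,L)$ with $\sigma_0(x)\neq 0$; in this trivialization $p_1^*s_H$ becomes $f$ and $t\cdot p_1^*\sigma_0$ becomes $t$ times a unit, so the two sections generate $(f,t)\cdot L_q=(\cI_Z\otimes p_1^*L)_q$. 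This shows $\cI_Z\otimes p_1^*L$ is globally generated, and hence, via the surjection above, that $\tcL$ is globally generated.
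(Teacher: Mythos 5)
Your proposal is correct and follows essentially the same route as the paper: both reduce the statement to global generation of the twisted ideal sheaf of the center $H\times\{0\}$ via the identity $\cO_{\tcX}(-E)=\rho^{-1}\cI\cdot\cO_{\tcX}$, and both exhibit the same explicit generators $p_1^*s_H$ and $t\cdot p_1^*s_i$ for a basis $s_1,\dots,s_N$ of $H^0(X,L)$. Your stalk-by-stalk verification (in particular at points of the center, where $\cI_Z=(f,t)$) simply spells out the last step that the paper leaves implicit.
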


\begin{proof}
Let $\cI$ be the ideal sheaf of $H\times\{0\}$ in $X\times\bA^1$. Then  $\cO_{\tcX}(-E)=\rho^{-1}\cI\cdot \cO_{\tcX}$. Therefore, it suffices to show that $p_1^*L\cdot\cI$ is globally generated.

Let us choose a basis $s_1,\cdots, s_N$ of the vector space $H^0(X,L)$ with $N:=\dim H^0(X,L)$. Let $s_H\in H^0(X,L)$ be a defining section of $H$, i.e. $H=(s_H=0)$. We may define sections $\sigma_0,\sigma_1,\cdots,\sigma_N\in H^0(X\times\bA^1, p_1^*L)$ as follows:
\begin{align*}
\sigma_0(x,t)& =s_H(x),\\
\sigma_i(x,t)& =t s_i(x)\quad\textrm{for any }1\leq i\leq N.
\end{align*}

Since $s_1,\cdots,s_N$ generate $L$, the sections $\sigma_0,\cdots,\sigma_N$ generate the subsheaf $p_1^*L\cdot\cI$ of $p_1^*L$. Hence we prove the proposition.
\end{proof}

Denote the lifting of $\sigma_i$ to $\tcX$ by $\tsigma_i\in H^0(\tcX,\tcL)$. The proof above implies that $\tphi:=[\tsigma_0,\cdots,\tsigma_N]$ defines a morphism $\tphi:\tcX\to\bP^N$, such that $\tcL\cong \tphi^*\cO(1)$.

Since $\tcL$ is globally generated, it is also $\tpi$-globally generated. By \cite[2.1.27]{laz04}, we may define the ample model $\cX$ of $(\tcX,\tcL)$ over $\bA^1$ by
\[
\cX:=\Proj_{\bA^1}\bigoplus_{i\geq 0}\tpi_*(\tcL^{\otimes i}),
\]
where $\psi:\tcX\to\cX$ is an algebraic fibre space with $\cX$ normal. Then $\tcL$ descends to a globally generated ample line bundle $\cL$ on $\cX$, i.e. $\tcL=\psi^*\cL$. Since $\tphi$ is induced by a base point free sub linear system of $|\tcL|$, $\tphi$ descends to a morphism $\phi:\cX\to\bP^N$, i.e. $\tphi=\phi\circ\psi$.

Denote the fibers of $\tpi:\tcX\to\bA^1$ and $\pi:\cX\to\bA^1$ by
$\tX_t$ and $X_t$, respectively. Then as a Cartier
divisor, $\tX_0$ can be written as
\[\tX_0= \hX_0+E,\] where $\hX_0$ is the birational 
transform of $X\times\{0\}$ under $\rho$.

\begin{prop}\label{tcX} With the above notation, we have the following properties.
\begin{enumerate}
\item The Stein factorization of $(\tphi,\tpi):\tcX\to\bP^N\times\bA^1$ is given by the following commutative diagram
 \[
 \begin{tikzcd}
  \tcX\arrow{r}{\psi}\arrow{rd}[swap]{(\tphi,\tpi)} & 
  \cX\arrow{d}
  {(\phi,\pi)}\\
  & \bP^N\times\bA^1
 \end{tikzcd}
 \]
where $\psi_*\cO_{\tcX}=\cO_{\cX}$, and $(\phi,\pi)$ is finite. 
\item The morphism $\psi$ is birational. More precisely, $\psi$ is an isomorphism away from
$\hX_0$, and it contracts $\hX_0$ to a point $v_0$ in $\cX$.
\item\label{tcX3} Let $\rho_E:E\to H$ be the $\bP^1$-bundle structure on $E$.
Then $\tphi$ sends each fiber of $\rho_E$ isomorphically onto a line in $\bP^N$.
\end{enumerate}
\end{prop}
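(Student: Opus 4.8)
The plan is to read off all three statements from the behaviour of $\tcL$ on three loci: a general fibre $\tX_t$ with $t\neq 0$, the strict transform $\hX_0$, and the fibres of $\rho_E$. First I would note that over $\bA^1\setminus\{0\}$ the blow-up $\rho$ is an isomorphism and $E$ is disjoint from $\tX_t$, so $\tcL|_{\tX_t}\cong L$ is ample; thus $\tcL$ is positive on every curve contained in a general fibre. Next, working in the blow-up chart where $t=x_1v$ and $E=\{x_1=0\}$, I would compute the lifted sections: $\tsigma_0$ restricts to a nowhere-vanishing section of $\tcL|_{\hX_0}$ (the zeros of $s_H$ and of the equation of $E$ cancel along $\hX_0\cap E$), while $\tsigma_i|_{\hX_0}=v\,s_i$ vanishes along $\hX_0=\{v=0\}$ for $i\geq 1$. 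This shows at once that $\tcL|_{\hX_0}\cong\cO_{\hX_0}$ is trivial and that $\tphi$ is constant, equal to $[1:0:\cdots:0]=:v_0$, along $\hX_0$. The same chart computation gives part \eqref{tcX3}: along a fibre $F=\rho_E^{-1}(h)$ the map is $[\,\mathrm{unit}:v\,s_1(h):\cdots:v\,s_N(h)\,]$, which, since the $s_i$ do not all vanish at $h$ ($L$ being globally generated), is the linear parametrization of the line joining $v_0$ to the $|L|$-image of $h$; equivalently $\tcL|_F\cong\cO_{\bP^1}(1)$ and $\tsigma_0|_F,\tsigma_i|_F$ furnish two independent sections, so $F$ maps isomorphically onto a line.

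For part (1) I would upgrade these degree computations to the assertion that the only complete curves $C$ with $\tcL\cdot C=0$ are those contained in $\hX_0$. A complete curve lies in some fibre $\tX_t$; if $t\neq0$ it sits in the ample locus, while if $t=0$ and $C\subseteq E$ then either $C$ is a fibre of $\rho_E$ (degree $1$) or $\rho_E(C)$ is a curve, in which case $\tcL\cdot C\geq (\rho_E^*(L|_H))\cdot C>0$. Granting this, $(\phi,\pi)$ is finite: it is proper over $\bA^1$, and a positive-dimensional fibre would contain a complete curve contracted by $\phi$, i.e. a curve with $\cL\cdot C=0$, contradicting ampleness of $\cL$ on $X_t$; hence $(\phi,\pi)$ is proper and quasi-finite, so finite. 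Since $\psi_*\cO_{\tcX}=\cO_{\cX}$ (the ample-model map is an algebraic fibre space onto the normal $\cX$) and $(\tphi,\tpi)=(\phi,\pi)\circ\psi$, uniqueness of the Stein factorization identifies $(\psi,(\phi,\pi))$ with it.

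For part (2) I would first observe that $\psi$ is birational, and even an isomorphism over $\bA^1\setminus\{0\}$: there $\tcL=p_1^*L$ and the relative $\Proj$ of its section ring is $X\times(\bA^1\setminus\{0\})$ because $L$ is ample on $X$. In particular $\dim\cX=n+1=\dim\tcX$. Then, $\psi$ being birational and projective onto the normal $\cX$, Zariski's Main Theorem applies: $\psi$ is an isomorphism over the complement of the image of its positive-dimensional-fibre locus, and every fibre is connected. By the curve characterization above every positive-dimensional fibre contains a complete $\tcL$-trivial curve and hence lies in $\hX_0$; conversely $\hX_0$ is irreducible with $\tphi|_{\hX_0}\equiv v_0$, so it is contracted to the single point $v_0$. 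A short connectedness argument upgrades $\psi^{-1}(v_0)\supseteq\hX_0$ to equality, yielding that $\psi$ restricts to an isomorphism $\tcX\setminus\hX_0\xrightarrow{\sim}\cX\setminus\{v_0\}$ and contracts $\hX_0$ to $v_0$.

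The main obstacle will be the curve characterization $\{\,\tcL\cdot C=0\,\}=\{\,C\subseteq\hX_0\,\}$, and in particular controlling $E$. Because $L$ is only globally generated and not necessarily very ample, $\tphi$ need not be injective on $E$, so one cannot read $\psi$ off from $\tphi$ directly and must route everything through the ample model and the numerical positivity of $\tcL$. Identifying $E\cong\bP(L|_H\oplus\cO_H)$, so that $\cO_{\tcX}(-E)|_E$ is $\rho_E$-relatively ample and $\tcL|_E$ is nef with $\rho_E^*(L|_H)$ supplying strict positivity on horizontal curves, is the technical heart; once this is in place the rest is bookkeeping with the explicit sections $\tsigma_i$.
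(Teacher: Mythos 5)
Your proposal reaches all three statements by a route that genuinely differs from the paper's, and in outline it works. The paper proves (1) by simply citing the relative version of \cite[2.1.28]{laz04} (the ample model \emph{is} the Stein factorization of the map defined by the sections); it gets the triviality of $\tcL|_{\hX_0}$ from the line-bundle isomorphism $\tcL|_{\hX_0}\cong\hrho_0^*(L\otimes\cO_X(-H))\cong\cO_{\hX_0}$ rather than from charts; and it settles the delicate half of (2) --- that $\psi$ contracts nothing of $E$ except $H_0=\hX_0\cap E$ --- by computing $\tcL|_E\cong\rho_E^*(L|_H)\otimes\cO_E(H_0)\cong\cO_E(H_1)$ and invoking the birational morphism $E\to C_p(H,L|_H)$ onto the projective cone of \cite[Section 3.1]{kol13}; part (3) then falls out of the intersection number $\tcL\cdot\rho_E^{-1}(x)=1$. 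Your substitutes are legitimate: the blow-up chart computation gives (3) and $\tphi|_{\hX_0}\equiv[1:0:\cdots:0]$ in a more self-contained way than the paper's degree argument; uniqueness of the Stein factorization (connected-fibers map followed by a finite map) together with finiteness of $(\phi,\pi)$ via ampleness of $\cL$ gives (1) without the citation (note you do need the identification $\phi^*\cO(1)\cong\cL$, which follows by pushing $\psi^*\phi^*\cO(1)\cong\psi^*\cL$ forward along $\psi$); and ZMT plus a numerical characterization of the $\psi$-contracted curves gives (2). What the paper's citations buy is precisely the bookkeeping you must now do by hand, and that is where your write-up has one real error.

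The error: the inequality $\tcL\cdot C\geq\rho_E^*(L|_H)\cdot C$ for horizontal curves $C\subset E$, equivalently $\cO_E(H_0)\cdot C\geq 0$, is false when $C\subseteq H_0$, and so is the closing remark that ``$\tcL|_E$ is nef with $\rho_E^*(L|_H)$ supplying strict positivity on horizontal curves.'' Indeed $\cO_E(H_0)|_{H_0}\cong N_{H_0/E}\cong (L|_H)^\vee$, so for a horizontal curve $C\subseteq H_0$ one has $\cO_E(H_0)\cdot C=-\rho_E^*(L|_H)\cdot C<0$ and $\tcL\cdot C=0$ --- as it must be, since $H_0\subset\hX_0$ and $\tcL|_{\hX_0}$ is trivial. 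Your curve characterization ($\tcL\cdot C=0$ if and only if $C\subseteq\hX_0$) survives exactly because these offending curves lie in $\hX_0$, but the case analysis has to be organized as: either $C\subseteq\hX_0$, or $C\subseteq E$ with $C\not\subseteq H_0$, and only in the second case does $\cO_E(H_0)\cdot C\geq 0$ hold (the intersection of a divisor with a curve not contained in it), whence $\tcL\cdot C\geq\rho_E^*(L|_H)\cdot C>0$. With that one-line repair, the rest of your argument --- positive-dimensional fibers of $\psi$ are covered by contracted, hence $\tcL$-trivial, curves; $\psi(\hX_0)$ is a single point by connectedness of $\hX_0$ and finiteness of $(\phi,\pi)$; and $\psi^{-1}(v_0)=\hX_0$ --- goes through.
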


\begin{proof}
\hfill\par
(1) It follows from the relative version of \cite[2.1.28]{laz04}. 
\medskip

(2) Since $\rho:\tcX\setminus\tX_0\to X\times(\bA^1\setminus\{0\})$ is an isomorphism, the restriction $\tcL|_{\tcX\setminus\tX_0}$ is $\tpi$-ample over $\bA^1\setminus\{0\}$. Hence $\psi$ is an isomorphism away from $\tX_0$, which implies that $\psi$ is birational.

Let $\hrho_0:=(p_1\circ\rho)|_{\hX_0}$
be the isomorphism from $\hX_0$ to $X$. Recall that $\tcL= (p_1\circ\rho)^* L\otimes \cO_{\tcX}(-E)$. Then
\begin{align*}
\tcL|_{\hX_0}& \cong \hrho_0^* L\otimes \cO_{\hX_0}(-E|_{\hX_0})\\
& \cong \hrho_0^* L\otimes\hrho_0^* \cO_X(-H)\\
& \cong \hrho_0^* (L\otimes\cO_X(-H))\\
& \cong \cO_{\hX_0}
\end{align*}
Hence $\tcL|_{\hX_0}$ is trivial, which implies that $\psi$ contracts $\hX_0$ to a point $v_0$ in $\cX$.

Since $N_{H/X}\cong\cO_X(H)|_H\cong L|_H$, we have $E\cong \bP_H(L|_H^\vee\oplus\cO_H)$.
It is clear that $H_0:=\hX_0|_E$ is the section of $\rho_E$
corresponding to the first projection $L|_H\oplus\cO_H\to L|_H$.
Denote by $H_1$ the other section of $\rho_E$ corresponding to the second
projection $L|_H\oplus\cO_H\to\cO_H$. Then
\begin{align*}
 \tcL|_E&\cong(p_1\circ\rho)^*(L)|_E\otimes\cO_{\tcX}(-E)|_E\\
 &\cong \rho_E^*(L|_H)\otimes\cO_{\tcX}(\hX_0)|_E\otimes
 \cO_{\tcX}(\tX_0)|_E\\
 &\cong \rho_E^*(L|_H)\otimes\cO_E(H_0)\\
 &\cong \cO_E(H_1)
\end{align*}
Since $L|_H$ is ample, for sufficiently large $k$ the linear system 
$|\tcL|_E^{\otimes k}|$ gives a birational morphism $E\to C_p(H,L|_H)$, 
where $C_p(H,L|_H)$ is the projective cone in the sense of
\cite[Section 3.1]{kol13}. In particular, any curve contracted by
$\psi|_E$ is contained in $H_0$. Thus $\psi|_E$ is an isomorphism away
from $H_0$, we prove (2).
\medskip

(3) As we have seen in the proof of (2), $\tcL|_E\cong\cO_E(H_1)$. Hence 
$(\tphi^*\cO(1)\cdot\rho_E^{-1}(x))=(\tcL\cdot\rho_E^{-1}(x))=1$ for any $x\in H$, we prove (3).
\end{proof}

\begin{rem}
 By the proof of Proposition \ref{tcX}, the Stein factorization
 of $\psi|_E:E\to X_0$ is given by $E\to C_p(H,L|_H)\to X_0$.
 Thus $C_p(H,L|_H)$ is 
 isomorphic to the normalization $X_0^\nu$ of $X_0$. In general, $X_0$
 is not necessarily normal. According to \cite[3.10]{kol13}, $X_0$ is 
 normal if and only if $H^1(X,L^{\otimes k})=0$ for any $k\geq 0$.
\end{rem}

\subsection{Constructing families of cyclic covers}

Let $m,d\geq 2$ be positive integers such that $m$ is a multiple
of $d$. It is clear that the linear system 
$\phi^*(|\mathcal{O}_{\bP^N}(m)|)$ is base point free. 
Hence by Bertini's theorem, the following
property holds for a 
general hypersurface $T\in |\mathcal{O}_{\bP^N}(m)|$:

\begin{enumerate}
 \item[($*$)\namedlabel{star}{($*$)}] \qquad$\phi^*(T)$ is smooth, does not contain $v_0$
 and intersects $X_0$ transversally.
\end{enumerate}

Fix a general hypersurface $T\in |\mathcal{O}_{\bP^N}(m)|$ 
satisfying property
\ref{star}. Let $\mu:\cY\to\cX$ and $\tmu:\tcY\to\tcX$ be the 
degree $d$ cyclic covers of $\cX$ and $\tcX$ branched along
$\phi^*(T)$ and $\tphi^*(T)$, respectively. 
Let $\pi_{\tcY}:=\tpi\circ\tmu$ and $\pi_{\cY}:=\pi\circ\mu$
be the composition maps.
Let $\psi_1:\tcY\to\cY$ be the lifting of $\psi:\tcX\to\cX$.
Then $\tcY$ and $\cY$ are proper flat families over $\bA^1$.
Denote the fibers of $\pi_{\tcY}$ and $\pi_{\cY}$ by $\widetilde{Y}_t$ and $Y_t$, respectively. 

For $t\neq 0$ we notice that $(p_1\circ\rho)|_{\tX_t}$ maps $\tX_t$ isomorphically onto $X$. Hence we may define a family of maps $f_t:X\to\bP^N$ by $f_t:=\tphi\circ(p_1\circ\rho)|_{\tX_t}^{-1}$ for $t\neq 0$. In projective coordinates, we have
\[
f_t(x)=[s_H(x), t s_1(x),\cdots,t s_N(x)].
\]

Denote by $S_t:=f_t^*(T)$ the pull back of $T$ under $f_t$. Then $S_t\in|L^{\otimes m}|$ for all but finitely many $t$. 
In projective coordinates, let $F=F(z_0,\cdots,z_N)$ be a degree $m$ homogeneous polynomial such that $T=(F=0)$. Expand $F$ as a polynomial of the single variable $z_0$ yields 
\[
F(z_0,z_1,\cdots,z_N)=F_0 z_0^m+F_1 z_0^{m-1}+\cdots+F_{m-1} z_0 +F_m,
\]
where $F_i$ is a homogeneous polynomial in $z_1,\cdots,z_N$ of degree $i$ for $0\leq i\leq m$.
Then $S_t$ is the zero locus of the following section in $H^0(X,L^{\otimes m})$:
\[
F(s_H, ts_1,\cdots,ts_N)=F_0 s_H^m+ \sum_{i=1}^m F_i(s_1,\cdots,s_N) s_H^{m-i} t^i.
\]

Since $T$ does not contain $\phi(v_0)=[1,0,\cdots,0]$, we have that $F_0\neq 0$. 
For simplicity we may assume that $F$ is a monic polynomial in $z_0$, i.e. $F_0=1$.
Thus for any $t\neq 0$ we have
\[
S_t=\left(s_H^m+ \sum_{i=1}^m F_i(s_1,\cdots,s_N) s_H^{m-i} t^i=0\right).
\]

\begin{defn}\label{generic}
 With the above notation, we say that $S\in |L^{\otimes m}|$ is a 
 \textit{generic small deformation} of $mH$, if $S$ is the zero locus of the section
 \[
 s_H^m + \sum_{i=1}^m F_i(s_1,\cdots,s_N) s_H^{m-i} t^i
 \]
 for generic choices of degree $i$ polynomials $F_i$ and for some $t\in\bA^1\setminus\{0\}$ with $|t|\leq \epsilon$, where 
 $\epsilon=\epsilon(\{F_i\})\in\bR_{>0}$ depends on the choice of $\{F_i\}$.
\end{defn}

Notice that Definition \ref{generic} does not depend on the choice
of the basis $s_1,\cdots,s_N$.

From our constructions we see that $S_t$ is automatically a generic small deformation of $mH$ for $|t|$ sufficiently small.

\begin{prop}\label{tcY}
 With the above notation, we have the following properties.
 \begin{enumerate}
  \item The variety $\tcY$ is smooth. 
  
  \item 
  The birational morphism $\psi_1:\tcY\to\cY$ is an isomorphism away from $\tmu^{-1}(\hX_0)$, where $\tmu^{-1}(\hX_0)$ is a disjoint union of $d$ isomorphic copies $\hX_{0,1},\cdots,\hX_{0,d}$ of $\hX_0$. Besides, $\psi_1$ contracts $\hX_{0,i}$ to a point $q_i$ in $\cY$, with $\mu^{-1}(v_0)=\{q_1,\cdots,q_d\}$.
  
  \item Let $\oY_0:=\tmu^{-1}(E)$. Then $\psi_1|_{\oY_0}:\oY_0\to Y_0$ is birational. Besides, the irreducible components of $\tY_0$ are $\oY_0, \hX_{0,1},\cdots,\hX_{0,d}$.

  \item For any $t\neq0$, the fibers $\tY_t$ and $Y_t$ are both isomorphic to the degree $d$
  cyclic cover of $X$ branched along $S_t$, where $S_t$ is a generic small deformation of $mH$ for $|t|$ sufficiently small. Besides, $S_t$ is smooth for $t$ sufficiently small.
 \end{enumerate}
\end{prop}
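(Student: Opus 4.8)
The plan is to analyze the cyclic cover $\tmu\colon\tcY\to\tcX$ one piece at a time, exploiting that $d\mid m$, so that $M:=\tcL^{\otimes m/d}=\psi^*(\cL^{\otimes m/d})$ is a $d$-th root of the line bundle $\tcL^{\otimes m}$ whose section cuts out the branch divisor $\tphi^*(T)$, and $\tcY=\{w^d=\tphi^*(T)\}$ sits inside the total space of $M$. For part (1), note first that $\tcX$ is smooth, being the blow-up of the smooth $X\times\bA^1$ along the smooth center $H\times\{0\}$. By the standard Jacobian computation for a cyclic cover, it then suffices to show that $\tphi^*(T)$ is a smooth divisor. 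Since $\tphi=\phi\circ\psi$ and, by property \ref{star}, $\phi^*(T)$ is smooth and avoids $v_0$, its preimage $\tphi^*(T)=\psi^{-1}(\phi^*(T))$ is disjoint from $\psi^{-1}(v_0)=\hX_0$; as $\psi$ is an isomorphism away from $\hX_0$ by Proposition \ref{tcX}(2), we get $\tphi^*(T)\cong\phi^*(T)$, which is smooth. This gives (1).

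For part (2) the crucial observation is that the branch divisor meets neither $\hX_0$ nor $v_0$. Restricting to $\hX_0$, I would use that $M|_{\hX_0}\cong\cO_{\hX_0}$ is trivial (since $\tcL|_{\hX_0}$ is trivial, as established in the proof of Proposition \ref{tcX}(2)) and that the defining section of $\tphi^*(T)$ restricts to a nowhere-vanishing section of $\cO_{\hX_0}$, hence to a nonzero constant because $\hX_0\cong X$ is proper and connected. The local equation $w^d=\mathrm{const}\neq0$ then splits $\tmu^{-1}(\hX_0)$ into $d$ disjoint copies $\hX_{0,1},\dots,\hX_{0,d}$ of $\hX_0$. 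Similarly $\mu$ is étale over $v_0\notin\phi^*(T)$, so $\mu^{-1}(v_0)$ consists of $d$ reduced points. Because cyclic covers are compatible with the isomorphism $\psi\colon\tcX\setminus\hX_0\to\cX\setminus\{v_0\}$, the lift $\psi_1$ is an isomorphism away from $\tmu^{-1}(\hX_0)$, and it carries the $d$ sheets $\hX_{0,i}$ bijectively onto the $d$ points of $\mu^{-1}(v_0)=\{q_1,\dots,q_d\}$, contracting each $\hX_{0,i}$ to $q_i$.

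For parts (3) and (4), writing $\tX_0=\hX_0+E$ and pulling back by $\tmu$ gives $\tY_0=\tmu^*\hX_0+\tmu^*E=\sum_{i=1}^d\hX_{0,i}+\oY_0$, where each $\hX_{0,i}$ appears with multiplicity one (the cover is étale over $\hX_0$) and $\oY_0:=\tmu^{-1}(E)$ is reduced (the cover is generically étale over the reduced $E$). The remaining content of (3) is the irreducibility of $\oY_0$, which I would establish by a monodromy argument: $\oY_0$ is the degree $d$ cyclic cover of the irreducible $E$ branched along $B_E:=\tphi^*(T)\cap E$, and $B_E$ is reduced and nonempty. It is nonempty because $\tcL|_E$ has degree $1$ on each fiber of $\rho_E$ by Proposition \ref{tcX}(3), so $\tphi^*(T)$ meets every fiber; it is reduced because $\tphi^*(T)$ is disjoint from $\hX_0\supset H_0$ and, by property \ref{star}, meets $E$ transversally on $E\setminus H_0$ (where $\psi|_E$ is an isomorphism onto $X_0$). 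The monodromy around any reduced branch component generates $\bZ/d\bZ$, so the étale part of $\oY_0$ over $E\setminus B_E$ is connected and $\oY_0$ is irreducible. Birationality of $\psi_1|_{\oY_0}$ then follows from the fact that $\psi|_E\colon E\to X_0$ is birational, being the composite of $E\to C_p(H,L|_H)$ with the normalization $X_0^\nu\to X_0$, together with a degree count ($\oY_0\to E$ and $Y_0\to X_0$ are both degree $d$, while $E\to X_0$ has degree $1$). Finally, for (4): over $t\neq0$ the map $\psi$, and hence $\psi_1$, is an isomorphism, while $(p_1\circ\rho)|_{\tX_t}$ identifies $\tX_t$ with $X$ and carries $\tcL|_{\tX_t}$ to $L$ and $\tphi^*(T)\cap\tX_t$ to $S_t=f_t^*(T)$; thus $\tY_t\cong Y_t$ is the degree $d$ cyclic cover of $X$ branched along $S_t$. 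The explicit formula for $S_t$ obtained before the proposition exhibits it as a generic small deformation of $mH$ for $|t|$ small, and generic smoothness of the morphism $\tphi^*(T)\to\bA^1$ (in characteristic zero) yields smoothness of $S_t$ for a dense set of arbitrarily small $t$.

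I expect the main obstacle to be the irreducibility of $\oY_0$ in part (3): one must control the restricted branch divisor $B_E$ carefully enough to know it is reduced — which hinges on the disjointness of $\tphi^*(T)$ from $H_0$ and on transversality along $E\setminus H_0$ — and then apply the monodromy/connectedness argument correctly. The remaining assertions are comparatively formal consequences of the blow-up geometry together with the étale behaviour of $\tmu$ over $\hX_0$ and $v_0$.
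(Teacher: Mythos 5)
Your proof is correct and follows the same overall route as the paper's, whose own proof is extremely terse (parts (2) and (3) are one-line deductions from Proposition \ref{tcX} and property \ref{star}); your write-up is essentially that proof with the details supplied. One of your additions is genuinely necessary rather than cosmetic: the paper disposes of part (3) with ``it follows from (2)'', but (2) by itself only yields the birationality claim, via the isomorphism $\oY_0\setminus\bigcup_i\hX_{0,i}\cong Y_0\setminus\{q_1,\cdots,q_d\}$ together with the density of these open sets on both sides; it does not show that $\oY_0$ is irreducible, which is also part of the statement. Your monodromy argument --- $B_E:=\tphi^*(T)|_E$ is nonempty because $\tcL|_E$ has degree $1$ on the rulings (Proposition \ref{tcX}~(\ref{tcX3})), and is reduced because it avoids $H_0$ while $\psi$ is an isomorphism off $\hX_0$, so the monodromy around a reduced component generates $\bZ/d\bZ$ and the cover of $E\setminus B_E$ is connected --- supplies exactly the missing content; to close the loop, note that this \'etale part is smooth and connected (hence irreducible) and is dense in the pure-dimensional $\oY_0$, so $\oY_0$ is its closure. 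Two minor points. Your degree-count argument for birationality in (3) implicitly requires knowing $Y_0$ is irreducible (or at least generically reduced); it is cleaner to invoke the isomorphism away from $\bigcup_i\hX_{0,i}$ that you already established in (2). And in (4), ``smoothness of $S_t$ for a dense set of arbitrarily small $t$'' is weaker than the asserted ``smooth for all $|t|$ sufficiently small''; generic smoothness does give the stronger statement, since the locus of bad $t$ is Zariski closed and proper in $\bA^1$, hence finite --- state it that way. (The paper instead deduces this from smoothness of $\phi^*(T)|_{X_0}$, i.e.\ from the transversality in \ref{star}, plus openness of smoothness.)
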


\begin{proof}
\hfill
\par (1) Since $T$ satisfies \ref{star}, $\tphi^*(T)$ is smooth. Hence $\tcY$ is smooth.

(2) It follows from Proposition \ref{tcX}, $v_0\not\in\phi^*(T)$ and its equivalent form $\tphi^*(T)\cap \hX_0=\emptyset$.

(3) It follows from (2).

(4) From our constructions we see that $(p_1\circ\rho)|_{\tX_t}$ maps $(\tX_t, \tphi^*(T)|_{\tX_t})$ isomorphically onto
$(X, S_t)$, hence the first statement follows. Since $T$ satisfies \ref{star}, $\phi^*(T)$ intersects $X_0$ transversally.
In particular, $\phi^*(T)|_{X_0}$ is smooth. Hence $S_t\cong
\tphi^*(T)|_{\tX_t}\cong \phi^*(T)|_{X_t}$ is smooth for all but finitely many $t$, in particular for $|t|$ sufficiently small.
\end{proof}

\section{Proofs of the Theorems}\label{bigpf}

\subsection{Hyperbolicity of cyclic covers of \texorpdfstring{$\bP^1$}{P1} and \texorpdfstring{$\bA^1$}{A1}}\label{p1}\hfill

Firstly, let us look at the hyperbolicity of cyclic covers of $\bP^1$.

Let $m,d\geq 2$ be positive integers such that $m$ is a multiple of $d$.
Denote by $f:C\to\bP^1$ the degree $d$ cyclic cover of $\bP^1$ branched along an effective divisor $D$ of degree $m$.
We may write
\[
D=a_1 p_1+\cdots + a_l p_l,
\]
where $p_1,\cdots,p_l\in \bP^1$ are distinct closed points, and $\sum_i a_i=m$. 

Let $f^\nu: C^\nu\rightarrow \bP^1$ be the normalization of $f$.
By the Riemann-Hurwitz formula, we have
\[
\chitop(C^\nu)=d\cdot \chitop(\bP^1)-\sum_{i=1}^l (d-\# (f^\nu)^{-1}(p_i)),
\]
where $\chitop(\cdot)$ is the topological Euler number. 
It is clear that $C$ is locally defined by the equation $y^d=x^{a_i}$ near
$p_i$, so $\# (f^\nu)^{-1}(p_i)=\gcd(a_i,d)$. Hence
 \[
  \chitop(C^\nu)=2d-\sum_{i=1}^l (d-\gcd(a_i,d)).
 \]
It is easy to see that $C^\nu$ is a 
 disjoint union of $\gcd(d, a_1,\cdots,a_l)$ isomorphic copies of an irreducible 
 smooth projective curve. Therefore, $C$ is Brody hyperbolic if and only if $\chitop(C^\nu)<0$.

\begin{lem}\label{fiberhyp1}
With the above notation, 
assume in addition that 
one of the following holds:
\begin{enumerate}
 \item[-] $d$ is divisible by $2$ or $3$, and $l\geq \frac{m}{d}+3$;
 \item[-] $d$ is relatively prime to $6$, and $l\geq\frac{m}{d}+2$.
\end{enumerate}
Then $\chitop(C^\nu)<0$, i.e. 
$C$ is Brody hyperbolic.
\end{lem}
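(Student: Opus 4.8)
The plan is to work directly with the Euler-number formula established just above the statement, namely
\[
\chitop(C^\nu) = 2d - \sum_{i=1}^l \bigl(d - \gcd(a_i,d)\bigr) = 2d - dl + \sum_{i=1}^l \gcd(a_i,d),
\]
together with the criterion that $C$ is Brody hyperbolic exactly when $\chitop(C^\nu)<0$. Writing $g_i := \gcd(a_i,d)$, the whole lemma reduces to the single inequality
\begin{equation*}
\sum_{i=1}^l g_i < d(l-2). \tag{$\dagger$}
\end{equation*}
So I would spend the proof bounding $\sum_i g_i$ from above, using two elementary facts: (i) $g_i \mid a_i$, hence $g_i \le a_i$ and $\sum_i g_i \le \sum_i a_i = m$; and (ii) if $d \nmid a_i$ then $g_i$ is a \emph{proper} divisor of $d$, so $g_i \le d/p$ where $p$ is the least prime factor of $d$.

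For the first case ($2\mid d$ or $3\mid d$, and $l \ge \tfrac{m}{d}+3$) only fact (i) is needed: the hypothesis gives $m = d\cdot\tfrac{m}{d} \le d(l-3)$, so $\sum_i g_i \le m \le d(l-3) < d(l-2)$, which is $(\dagger)$. Note that this argument is actually insensitive to the divisibility of $d$; the divisibility hypothesis only serves to delimit the range complementary to the second case, where the sharper bound on $l$ is unavailable.

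For the second case ($\gcd(d,6)=1$, so $p\ge 5$, and $l \ge \tfrac{m}{d}+2$) I would set $k := \#\{\,i : d\mid a_i\,\}$ and split the sum accordingly. Each index counted by $k$ forces $a_i \ge d$, hence $kd \le m$ and $k \le \tfrac{m}{d} \le l-2$. The crux, and the step I expect to be the main obstacle, is upgrading this to $k \le l-3$, since the naive divisor estimate alone leaves a gap exactly at $k=l-2$. I would close it by contradiction using $\sum_i a_i = m$ and positivity of multiplicities: if $k=l-2$, then $k \ge \tfrac{m}{d}$ forces $k=\tfrac{m}{d}$, so the $k$ indices with $d\mid a_i$ already exhaust the sum ($\sum_{d\mid a_i} a_i = kd = m$), leaving the remaining $l-k=2$ points with no mass available although each contributes at least $1$, pushing the total to at least $m+2$. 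With $k \le l-3$ in hand, fact (ii) and $p\ge 5$ give
\[
\sum_{i=1}^l g_i \le kd + (l-k)\frac{d}{5} = \frac{d(4k+l)}{5} \le \frac{d(5l-12)}{5} = d\Bigl(l-\tfrac{12}{5}\Bigr) < d(l-2),
\]
which is $(\dagger)$. In both cases $\chitop(C^\nu)<0$, so $C$ is Brody hyperbolic.
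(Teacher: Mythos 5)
Your proof is correct, and it takes a genuinely different route from the paper. The paper argues by contraposition and classification: setting $J=\{j: d\nmid a_j\}$ and using only the crude bound $\gcd(a_i,d)\le d/2$ for $i\in J$, it reduces to $\#J\le 4$ and then enumerates, in a table, \emph{every} configuration of multiplicities with $\chitop(C^\nu)\ge 0$ (these are exactly the spherical and flat orbifold structures on $\bP^1$), recording for each the divisibility forced on $d$ and the maximal possible $l$; the lemma is then read off from the table. You instead prove the single equivalent inequality $\sum_i\gcd(a_i,d)<d(l-2)$ directly: the first case via $\gcd(a_i,d)\le a_i$ alone (correctly noting it needs no divisibility hypothesis, which the table also confirms), and the second via the sharper bound $\gcd(a_i,d)\le d/5$ for non-multiples together with the counting step that excludes $k=l-2$ --- that step is sound, since $k=l-2$ would force $m=kd$ and leave no mass for the two remaining points of positive multiplicity. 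Your argument is shorter and avoids the unspelled-out ``careful study'' behind the table; the paper's approach buys more, namely the exact list of exceptional configurations, which shows the bounds are sharp (e.g.\ for $d$ even the configuration with four residues $1/2$ realizes $l=m/d+2$ with $\chitop=0$, so $l\ge m/d+3$ cannot be weakened there).
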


\begin{proof}
Assume to the contrary that $\chitop(C^\nu)\geq 0$.
Define an index set $J:=\{j\mid 1\leq j\leq l,\, a_j \textrm{ is not a multiple of }d\}$.
Notice that $\gcd(a_i,d)\leq d/2$ if $a_i$ is not a multiple of $d$.
So we have $\chitop(C^\nu)<0$ as soon as $\#J\geq 5$. 
Hence we only need to consider cases when $\#J\leq 4$. 
For simplicity, we may assume that $J=\{1,\cdots,\#J\}$.

After a careful study we get the following table, which illustrates all cases when $\chitop(C^\nu)\geq 0$, i.e. when
$C$ is not Brody hyperbolic, up to permutations of $a_1,\cdots, a_l$. 

\begin{table}[H]
\[\begin{array}{|c|c|c|c|c|c|c|c|}
\hline \#J & \{a_1/d\} & \{a_2/d\} & \{a_3/d\} & \{a_4/d\} & C_1^\nu & 
\gamma & l\leq\\ 
\hline 0 & 0 & 0 & 0 & 0 & \bP^1 & d & m/d\\
\hline 2 & p/q & (q-p)/q & 0 & 0 & \bP^1 & d/q & m/d+1\\
\hline 3 & 1/2 & 1/3 & 1/6 & 0 & \rm{elliptic} & d/6 & m/d+2\\
 3 & 1/2 & 2/3 & 5/6 & 0 & \rm{elliptic} & d/6 & m/d+1\\
 3 & 1/2 & 1/4 & 1/4 & 0 & \rm{elliptic} & d/4 & m/d+2\\
 3 & 1/2 & 3/4 & 3/4 & 0 & \rm{elliptic} & d/4 & m/d+1\\ 
 3 & 1/3 & 1/3 & 1/3 & 0 & \rm{elliptic} & d/3 & m/d+2\\
 3 & 2/3 & 2/3 & 2/3 & 0 & \rm{elliptic} & d/3 & m/d+1\\
\hline 4 & 1/2 & 1/2 & 1/2 & 1/2 & \rm{elliptic} & d/2 & m/d+2\\
\hline
\end{array}
\]
\caption{Cyclic covers of $\bP^1$ that are not hyperbolic}
\label{table:cycP1}
\end{table}

We use the following notation in Table \ref{table:cycP1}.
It is clear that $\#J\leq 4$ if $\chitop(C^\nu)\geq 0$. Let $\{x\}$ be
the fractional part of a real number $x$. Let $p<q$ be two positive integers
that are relatively prime. Denote by $C_1^\nu$ a connected component of $C^\nu$.
Let $\gamma:=\gcd(d,a_1,\cdots,a_l)$ be the number of connected 
components of $C^\nu$. The fact that $\gamma$ is always
an integer gives certain divisibility condition on $d$.
\end{proof}

Next, we will discuss about the hyperbolicity of cyclic covers of $\bA^1$. 

Let us identify $\bA^1$ with $\bP^1\setminus\{\infty\}$.
Denote $C^0:=f^{-1}(\bA^1)$. If $D$ is supported away from $\infty$, then
$C^0$ is the degree $d$ cyclic cover of $\bA^1$ branched along an effective divisor $D$ of degree $m$. Since $\infty\in D$, we have $\# (f^\nu)^{-1}(\infty)=d$. Hence
\begin{align*}
\chitop((C^0)^\nu)& = \chitop(C^\nu)-d \\
& =d-\sum_{i=1}^l (d-\gcd(a_i,d))
\end{align*}

It is easy to see that $(C^0)^\nu$ is a 
disjoint union of $\gcd(d, a_1,\cdots,a_l)$ isomorphic copies of an irreducible 
 smooth affine curve. Therefore, $C^0$ is Brody hyperbolic if and only if $\chitop((C^0)^\nu)<0$.

\begin{lem}\label{fiberhyp2}
With the above notation, 
assume in addition that $D$ is supported away from $\infty$. 
Moreover, assume that one of the following holds:
\begin{enumerate}
 \item[-] $d$ is even and $l\geq \frac{m}{d}+2$;
 \item[-] $d$ is odd and $l\geq \frac{m}{d}+1$.
\end{enumerate}
Then $\chitop((C^0)^\nu)<0$, i.e. $C^0$ is Brody hyperbolic.
\end{lem}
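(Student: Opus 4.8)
The plan is to reduce everything to the single inequality $\chitop((C^0)^\nu)<0$, since the text just above the statement records that $C^0$ is Brody hyperbolic precisely when this Euler number is negative. First I would rewrite the formula derived there as
\[
\chitop((C^0)^\nu)=\sum_{i=1}^l \gcd(a_i,d)-(l-1)d,
\]
which is the quantity I need to bound from above. The only arithmetic input I would use is that $\gcd(a_i,d)$ divides $a_i$ and $a_i\geq 1$, so $\gcd(a_i,d)\leq a_i$, with equality exactly when $a_i\mid d$. Summing over $i$ and invoking $\sum_i a_i=m$ then gives $\sum_i\gcd(a_i,d)\leq m$, hence $\chitop((C^0)^\nu)\leq m-(l-1)d$. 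This single bound drives both cases.

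When $d$ is even I would simply feed in the hypothesis $l\geq \tfrac{m}{d}+2$, which yields $(l-1)d\geq m+d$ and therefore $\chitop((C^0)^\nu)\leq -d<0$; nothing further is needed. When $d$ is odd the weaker hypothesis $l\geq\tfrac{m}{d}+1$ only produces $\chitop((C^0)^\nu)\leq m-(l-1)d\leq 0$. If in fact $l\geq\tfrac{m}{d}+2$ the even-case computation again gives strict negativity, so the plan leaves exactly one borderline configuration to rule out: $d$ odd with $l=\tfrac{m}{d}+1$, where $(l-1)d=m$ and the estimate degenerates to $\chitop((C^0)^\nu)\leq 0$.

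The hard part will be excluding equality in this borderline case, and I expect to settle it by a parity argument rather than by any finer geometric analysis. If $\chitop((C^0)^\nu)=0$ held, then equality would be forced in $\sum_i\gcd(a_i,d)\leq\sum_i a_i$, so $\gcd(a_i,d)=a_i$, i.e. $a_i\mid d$, for every $i$. Because $d$ is odd each such divisor $a_i$ is odd, whence $\sum_i a_i\equiv l\pmod 2$; but $\sum_i a_i=m=(l-1)d$ is congruent to $l-1\pmod 2$ since $d$ is odd. These two congruences force $l\equiv l-1\pmod 2$, which is impossible. This contradiction shows $\chitop((C^0)^\nu)<0$ in the remaining case as well, so in all cases $C^0$ is Brody hyperbolic. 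I would note that this is a cleaner argument than the case table needed in Lemma \ref{fiberhyp1}, precisely because the affine correction term turns the coarse bound $\gcd(a_i,d)\leq a_i$ into something decisive.
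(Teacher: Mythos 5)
Your proof is correct, and it takes a genuinely different route from the paper's. The paper argues by contradiction and classification: it sets $J=\{j : a_j \text{ is not a multiple of } d\}$, uses $\gcd(a_j,d)\le d/2$ for $j\in J$ to reduce to $\#J\le 2$, and then enumerates in a table all configurations with $\chitop((C^0)^\nu)\ge 0$ --- either every $a_i$ is a multiple of $d$ (forcing $l\le m/d$), or $d$ is even and exactly two of the $a_i$ are congruent to $d/2$ modulo $d$ (forcing $l\le m/d+1$) --- and the lemma is read off from this list by contraposition. You instead sum the termwise bound $\gcd(a_i,d)\le a_i$ to get $\chitop((C^0)^\nu)\le m-(l-1)d$, which disposes of every case except the single borderline one, $d$ odd and $l=m/d+1$, and there your parity argument (equality would force each $a_i$ to be an odd divisor of $d$, giving $l\equiv m\equiv l-1\pmod 2$) correctly excludes $\chitop((C^0)^\nu)=0$. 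One small point worth making explicit: your case split tacitly uses the standing assumption that $m$ is a multiple of $d$, so that $l\ge m/d+1$ really means $l=m/d+1$ or $l\ge m/d+2$; this holds here, but should be said. As for what each approach buys: yours is shorter and avoids the enumeration entirely, while the paper's table yields more information --- it exhibits the extremal non-hyperbolic covers (components $\bA^1$, and $\bA^1\setminus\{0\}$ when $d$ is even), showing the stated bounds on $l$ are sharp, and it runs in parallel with the proof of Lemma \ref{fiberhyp1}, where your coarse bound genuinely breaks down: in the projective borderline case $d$ coprime to $6$, $l=m/d+2$, the equality analysis gives $m=(l-2)d$ with all $a_i$ odd, so $m\equiv l-2\equiv l\pmod 2$ and the mod-$2$ obstruction evaporates --- consistent with your own closing remark that the affine correction term is what makes your argument decisive.
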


\begin{proof}
Assume to the contrary that $\chitop((C^0)^\nu)\geq 0$.
Define an index set $J:=\{j\mid 1\leq j\leq l,\, a_j \textrm{ is not a multiple of }d\}$.
Notice that $\gcd(a_i,d)\leq d/2$ if $a_i$ is not a multiple of $d$.
So we have $\chitop((C^0)^\nu)<0$ as soon as $\#J\geq 3$.
Hence we only need to consider cases when $\#J\leq 2$. For simplicity, 
we may assume that $J=\{1,\cdots,\#J\}$.

After a careful study we get the following table, which 
illustrates all cases when $\chitop((C^0)^\nu)\geq 0$, i.e. 
when $C^0$ is not Brody hyperbolic, up to permutations of $a_1,\cdots, a_l$.
\begin{table}[H]
\[\begin{array}{|c|c|c|c|c|c|}
\hline \#J & \{a_1/d\} & \{a_2/d\} & (C^0)_1^\nu & \gamma & l\leq\\
\hline 0 & 0 & 0 & \bA^1 & d & m/d\\
\hline 2 & 1/2 & 1/2 & \bA^1\setminus\{0\} & d/2 & m/d+1\\ 
\hline
\end{array}
\]
\caption{Cyclic covers of $\bA^1$ that are not hyperbolic}
\label{table:cycA1}
\end{table}
 
We use the following notation for Table \ref{table:cycA1}. It is clear that $\#J\leq 2$ if $\chitop((C^0)^\nu)\geq 0$. 
Denote by $(C^0)_1^\nu$ a connected component of $(C^0)^\nu$.
Let $\gamma:=\gcd(d,a_1,\cdots,a_l)$ be the number of
connected components of $(C^0)^\nu$. The fact that $\gamma$ is always
an integer gives certain divisibility condition on $d$.
\end{proof}

\subsection{Proofs}\label{pf}
To begin with, we will study the enumerative geometry problem of counting the intersections of a generic
hypersurface with the generators of the projective cone $\tphi(E)$ (see also \cite[1.3]{zai09}).

A map $\alpha:\bP^1\to E$ is called a 
\textit{ruling} if it parametrizes a fiber of the $\bP^1$-bundle projection $\rho_E:E\to H$.
We say that $\alpha$ {\it corresponds to} $x\in H$
if $\alpha$ parametrizes $\rho_{E}^{-1}(x)$.

\begin{lem}\label{tangentdim}
With the above notation, the following properties hold for a general hypersurface
 $T\in |\mathcal{O}_{\bP^N}(m)|$:
 \begin{enumerate}
  \item $\phi(v_0)$ is not contained in $T$;
  \item For any ruling $\alpha:\bP^1\to E$, $(\tphi
  \circ\alpha)^*(T)$ is supported at $>(m-n)$ points.
 \end{enumerate} 
\end{lem}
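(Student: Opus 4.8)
The idea is to set up a dimension count (as hinted) showing that for a general $T\in|\cO_{\bP^N}(m)|$, no generator (ruling image) of the cone $\tphi(E)$ is tangent to $T$ at too many points. Let me organize the two assertions.

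For (1), this is essentially immediate from Bertini, or just a single linear condition. The locus of $T\in|\cO_{\bP^N}(m)|$ containing the fixed point $\phi(v_0)=[1,0,\dots,0]$ is a proper linear subspace (a hyperplane) in the projective space $|\cO_{\bP^N}(m)|$, cut out by the vanishing of the coefficient $F_0$ of $z_0^m$. So a general $T$ avoids $\phi(v_0)$; this is exactly the normalization $F_0=1$ already made in the construction.

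For (2), the content is the following. By Proposition \ref{tcX}(\ref{tcX3}), for each $x\in H$ the composite $\tphi\circ\alpha$ sends $\bP^1$ isomorphically onto a line $\ell_x\subset\bP^N$, where $\alpha$ is the ruling corresponding to $x$. Pulling back $T=(F=0)$ gives a degree $m$ divisor $(\tphi\circ\alpha)^*(T)$ on $\bP^1$; I want to bound from below the number of \emph{distinct} points in its support. If the support has $\le m-n$ points, then the divisor has a point of multiplicity forcing the restriction $F|_{\ell_x}$ (a binary form of degree $m$) to have total ``excess vanishing'' at least $n$, i.e. the restriction lies in a subvariety of $H^0(\ell_x,\cO(m))$ of codimension $\ge n$ cut out by the condition of having $\le m-n$ distinct roots. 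The plan is to let $x$ vary over $H$ (dimension $n-1$), so the universal family of such lines $\ell_x$ with a marked non-reduced structure has dimension $(n-1)+$ (dimension of the locus of degree $m$ forms with $\le m-n$ distinct roots). The latter stratum has dimension $(m-n)+1$ inside the $(m+1)$-dimensional space of sections, hence codimension $n$. Incidence/parameter-counting then shows that the ``bad'' $T$'s — those tangent in this strong sense to some generator — form a proper closed subset of $|\cO_{\bP^N}(m)|$, provided the map from the incidence variety to $|\cO_{\bP^N}(m)|$ cannot be dominant. The key numerical input is that imposing $n$ conditions on $F|_{\ell_x}$ for each of a $\le(n-1)$-dimensional family of lines still leaves a positive-codimension locus in $|\cO_{\bP^N}(m)|$, because $n>(n-1)$.

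\textbf{Main obstacle.} The delicate step is making the incidence-variety dimension count rigorous: I must build the correct parameter space
\[
\Sigma=\{(x,T):\ (\tphi\circ\alpha_x)^*(T)\ \text{is supported at}\ \le m-n\ \text{points}\}\subset H\times|\cO_{\bP^N}(m)|,
\]
show it is closed, estimate $\dim\Sigma$ via the first projection (fibering over $H$, each fiber being the preimage in $|\cO_{\bP^N}(m)|$ of a codimension-$\ge n$ stratum under the restriction-to-$\ell_x$ map), and conclude that the second projection to $|\cO_{\bP^N}(m)|$ has image of dimension $\le \dim\Sigma<\dim|\cO_{\bP^N}(m)|$, hence is not dominant. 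The subtlety is controlling the fiber dimension of the restriction map $F\mapsto F|_{\ell_x}$ and verifying the codimension of the non-reduced-form stratum; one must check the restriction map $H^0(\bP^N,\cO(m))\to H^0(\ell_x,\cO(m))$ is surjective (true since $\ell_x$ is a line and $m\ge1$), so pulling back the codimension-$n$ stratum keeps codimension exactly $n$. Granting this, the difference $n-(n-1)=1>0$ guarantees a general $T$ works, and intersecting with the (finitely many, or open) conditions from (1) and the earlier property \ref{star} finishes the proof.
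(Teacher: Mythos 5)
Your proposal is correct and follows essentially the same route as the paper: an incidence variety over $H$ whose fibers are the preimages of the codimension-$n$ stratum $W_{m-n}\subset|\cO_{\bP^1}(m)|$ of divisors with $\leq m-n$ support points, with the fiber dimension controlled by the (surjective, hence flat) restriction of sections to the line $\ell_x=\tphi(\rho_E^{-1}(x))$, and the conclusion that the projection to $|\cO_{\bP^N}(m)|$ is not dominant since $n>\dim H=n-1$. The only cosmetic difference is that the paper phrases the fiber-dimension control as flatness of the linear projection $|\cO_{\bP^N}(m)|^\circ\to|\cO_{\bP^1}(m)|$ (well defined there because every $T$ containing $\ell_x$ passes through $\phi(v_0)$), whereas you use surjectivity of the linear map on spaces of sections; these are equivalent.
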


\begin{proof}
Define $|\mathcal{O}_{\bP^N}(m)|^\circ:=\{T\in|\mathcal{O}_{\bP^N}(m)|:
\phi(v_0)\textrm{ is not contained in }T\}$. Denote by
 $\alpha_x:\bP^1\rightarrow E$ the ruling of $E$
 corresponding to $x\in H$.

We define an incidence variety $Z$ as 
\[
Z:=\{(T,x)\in |\mathcal{O}_{\bP^N}(m)|^\circ\times H: 
(\tphi\circ\alpha_x)^*(T)\textrm{ is supported at }
\leq (m-n)\textrm{ points} \}.
\]
Denote the two projections from $Z$ by $pr_1$ and $pr_2$.
Let $Z_x$ be the fiber of $pr_2:Z\to H$ over $x$. 
Then 
\[
Z_x\cong\{ T\in |\mathcal{O}_{\bP^N}(m)|^\circ: 
(\tphi\circ\alpha_x)^*(T)\textrm{ is supported at }
\leq (m-n)\textrm{ points} \}.
\]
By Proposition \ref{tcX} (\ref{tcX3}), $(\tphi\circ\alpha_x)$ parametrizes a line in $\bP^N$. Therefore, the rational map
\[
(\tphi\circ\alpha_x)^*: |\mathcal{O}_{\bP^N}(m)|\dashrightarrow |\mathcal{O}_{\bP^1}(m)|
\]
is a projection between projective spaces.
In particular, $(\tphi\circ\alpha_x)^*: |\mathcal{O}_{\bP^N}(m)|^\circ\to |\mathcal{O}_{\bP^1}(m)|$ is a
flat morphism for any $x\in H$. 

Let $W_k:=\{D\in|\mathcal{O}_{\bP^1}(m)|: D\textrm{ is supported at }\leq k \textrm{ points}\}$.
Then $\dim W_k=k$. It is clear that $Z_x=((\tphi\circ\alpha_x)^*)^{-1}(W_{m-n})$, so
\begin{align*}
\dim Z_x & =\dim ((\tphi\circ\alpha_x)^*)^{-1}(W_{m-n})\\
& =\dim |\mathcal{O}_{\bP^N}(m)|^\circ - \dim |\mathcal{O}_{\bP^1}(m)| + \dim W_{m-n}\\
& =\dim |\mathcal{O}_{\bP^N}(m)|^\circ - n
\end{align*} 
Hence 
$\dim Z=\dim Z_x+\dim H = \dim |\mathcal{O}_{\bP^N}(m)|^\circ - 1$ for a general choice of $x$, which implies that
$\dim pr_1(Z)\leq \dim |\mathcal{O}_{\bP^N}(m)|^\circ - 1$.
Thus the map $pr_1$ is not surjective, which means that a general
hypersurface $T\in |\mathcal{O}_{\bP^N}(m)|^\circ$ will satisfy 
property (2).
The lemma then follows automatically.
\end{proof}

Since property \ref{star} holds for a general hypersurface 
$T\in|\mathcal{O}_{\bP^N}(m)|$, Lemma \ref{tangentdim} implies that
the following property also holds for general $T$.

\begin{enumerate}
 \item[($**$)\namedlabel{doublestar}{($**$)}] \qquad$\phi^*(T)$ is smooth, does not contain $v_0$
 and intersects $X_0$ transversally.
 Besides, $(\tphi\circ\alpha_x)^*(T)$ is supported
 at $\geq(m-n+1)$ points for any $x\in H$.
\end{enumerate}

\medskip

From now on we always fix a general hypersurface 
$T\in |\mathcal{O}_{\bP^N}(m)|$. Then we may assume that
$T$ satisfies \ref{doublestar}.
\medskip

The following theorem is the main tool to prove Theorem \ref{mainthm}.
\begin{thm}[{\cite[3.11.1]{kob98}}]
\label{deformation}
Let $\pi:\cX\rightarrow R$ be a proper family of connected complex analytic spaces. If there is a point $r_0\in R$ such that the fiber
$X_{r_0}$ is Brody hyperbolic, then there exists an open neighborhood 
(in the Euclidean topology)
$U\subset R$ of $r_0$ such that for each $r\in U$,
the fiber $X_r$ is Brody hyperbolic.
\end{thm}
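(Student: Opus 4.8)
The plan is to argue by contradiction, using Brody's reparametrization lemma to convert entire curves living in nearby non-hyperbolic fibers into a single entire curve in the fiber $X_{r_0}$, contradicting its Brody hyperbolicity.

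First I would suppose the conclusion fails. Then no Euclidean neighborhood of $r_0$ consists entirely of Brody hyperbolic fibers, so there is a sequence $r_k\to r_0$ in $R$ for which $X_{r_k}$ is not Brody hyperbolic. For each $k$ this yields a non-constant holomorphic map $f_k:\bC\to X_{r_k}$, which I regard as a map $f_k:\bC\to\cX$ whose image lies in a single fiber, so that $\pi\circ f_k\equiv r_k$. The goal is to produce, as a limit of suitable reparametrizations of the $f_k$, a non-constant holomorphic map $\bC\to X_{r_0}$.

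Next I would set up a compact arena in which to take limits. Since $\pi$ is proper, I may shrink $R$ to a relatively compact neighborhood $B$ of $r_0$; then $K:=\pi^{-1}(\overline{B})$ is compact, and for $k$ large the curve $f_k$ maps into $K$. On $K$ I fix a hermitian length function (a hermitian metric on the smooth locus, extended so as to dominate the local Euclidean length under local embeddings of the complex space into affine space), which gives a well-defined derivative norm $\|f_k'(z)\|$ that vanishes exactly where $f_k$ is infinitesimally constant. The essential tool is then Brody's reparametrization lemma: applied to the non-constant maps $f_k$ into the compact space $K$, it provides affine reparametrizations $\phi_k(z)=a_k z+b_k$ of $\bC$ so that $g_k:=f_k\circ\phi_k$ satisfies the normalization $\|g_k'(0)\|=1$ together with uniform derivative bounds $\|g_k'(z)\|\le 1$ on disks $\Delta_{R_k}$ whose radii satisfy $R_k\to\infty$. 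The uniform bound and the compactness of $K$ make $\{g_k\}$ a normal family, so after passing to a subsequence $g_k\to g_0$ uniformly on compact subsets of $\bC$, with $g_0:\bC\to K$ holomorphic.

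Finally I would identify the limit. The normalization passes to the limit, giving $\|g_0'(0)\|=1$, so $g_0$ is non-constant. On the other hand $\pi\circ g_k=\pi\circ f_k\circ\phi_k\equiv r_k$, and $r_k\to r_0$, whence $\pi\circ g_0\equiv r_0$; thus $g_0$ is a non-constant holomorphic map $\bC\to X_{r_0}$, the desired contradiction. I expect the main obstacle to be the two analytic inputs that make the limit genuinely non-constant and genuinely holomorphic: the correct form of Brody's reparametrization lemma (whose derivative normalization is precisely what rules out a constant limit, since a naive limit of the $f_k$ could collapse to a point) and the verification that a hermitian length function behaves well across the possibly singular fibers of the complex-analytic family, so that ``derivative one at the origin'' is meaningful and the normal-family argument applies uniformly in $k$.
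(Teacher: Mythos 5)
Your argument is correct and is essentially the proof of the result that the paper quotes verbatim from \cite[3.11.1]{kob98} without reproving it, namely Brody's reparametrization lemma plus a normal-families limit into the compact set $\pi^{-1}(\overline{B})$; so there is nothing in the paper to compare against beyond the citation. (Two small imprecisions that do not affect the argument: Brody's reparametrization is a scaling composed with a M\"obius automorphism of the disk rather than an affine map of $\bC$, and the resulting bound is $\|g_k'(z)\|\le R_k^2/(R_k^2-|z|^2)$ on $\Delta_{R_k}$ rather than $\le 1$ everywhere --- which is still uniform on each fixed compact set for $k$ large, exactly what the normality and $C^1$-convergence steps require.)
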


We will prove the following theorem, a stronger result that implies Theorem \ref{mainthm}.
\begin{thm}\label{refined1}
Let $X$ be a smooth projective variety with $\dim X=n$. Let $L$ be a globally
 generated ample line bundle on $X$.
 Suppose that there exists a smooth hypersurface $H\in |L|$ that is Brody hyperbolic.
 Let $m,d\geq 2$ be positive integers such that $m$ is a multiple of $d$. 
 For a generic small deformation $S$ of
 $mH\in |L^{\otimes m}|$, let $Y$ be the degree $d$ cyclic cover of $X$ branched along $S$. 
 Then $Y$ is Brody hyperbolic if one of the following holds:
\begin{enumerate}
 \item[-] $d$ is divisible by $2$ or $3$, and $m\geq d\lceil \frac{n+2}{d-1} \rceil$;
 \item[-] $d$ is relatively prime to 6, and $m\geq d\lceil \frac{n+1}{d-1} \rceil$.
 \end{enumerate}
\end{thm}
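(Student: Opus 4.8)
The plan is to exhibit $Y$ as a nearby fiber in the family $\pi_{\cY}\colon\cY\to\bA^1$ of Section~\ref{family} and to transfer hyperbolicity from the central fiber by the deformation principle. By Proposition~\ref{tcY}(4), for $|t|$ small and nonzero the fiber $Y_t$ is isomorphic to the degree $d$ cyclic cover of $X$ branched along a generic small deformation $S_t$ of $mH$, hence to $Y$; so it is enough to prove that $Y_0$ is Brody hyperbolic and then apply Theorem~\ref{deformation} with $r_0=0$. A key structural remark is that one must run the argument on $\cY$ and not on the smooth model $\tcY$: the components $\hX_{0,1},\dots,\hX_{0,d}$ of $\tY_0$ are each isomorphic to $X$ and need not be hyperbolic, whereas in $\cY$ they are contracted to the points $q_1,\dots,q_d$ (Proposition~\ref{tcY}(2)), so they disappear from the picture.

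To show $Y_0$ is Brody hyperbolic, I would take an arbitrary holomorphic map $g\colon\bC\to Y_0$ and prove it constant. If $g(\bC)\subseteq\{q_1,\dots,q_d\}=\mu^{-1}(v_0)$ then $g$ is constant and we are done, so assume the image meets the complement of this finite set. The morphism $\psi_1|_{\oY_0}\colon\oY_0\to Y_0$ is finite birational and an isomorphism away from $\{q_i\}$ (Proposition~\ref{tcY}(2),(3)), and $\oY_0$ is smooth, so it is the normalization of $Y_0$ and the non-normal locus of $Y_0$ is contained in $\{q_i\}$. Since $\bC$ is smooth and its image is not contained in that finite set, the component of $\bC\times_{Y_0}\oY_0$ dominating $\bC$ is finite and birational over the normal space $\bC$, hence an isomorphism; this yields a lift $\tilde g\colon\bC\to\oY_0$ of $g$. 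Composing with the cyclic cover and the ruling projection gives $\bar g:=\rho_E\circ(\tmu|_{\oY_0})\circ\tilde g\colon\bC\to H$, which is constant because $H$ is Brody hyperbolic; write its value as $x_0\in H$.

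Consequently $\tilde g$ factors through $\tmu^{-1}\big(\rho_E^{-1}(x_0)\big)$, which is precisely the degree $d$ cyclic cover of the ruling $\alpha_{x_0}(\bP^1)\cong\bP^1$ branched along $(\tphi\circ\alpha_{x_0})^*(T)$. Here $d\mid m$, and by Proposition~\ref{tcX}(3) the ruling maps isomorphically to a line with $\tcL\cdot\alpha_{x_0}(\bP^1)=1$, so this branch divisor has degree $m$; by property~\ref{doublestar} it is supported at $l\geq m-n+1$ points. Now Lemma~\ref{fiberhyp1} applies. When $d$ is divisible by $2$ or $3$, the hypothesis $m\geq d\lceil\frac{n+2}{d-1}\rceil$ is equivalent (using $d\mid m$) to $m(d-1)\geq d(n+2)$, i.e. $m-n+1\geq\frac{m}{d}+3$, so $l\geq\frac{m}{d}+3$ and the cover is Brody hyperbolic; when $\gcd(d,6)=1$, the hypothesis $m\geq d\lceil\frac{n+1}{d-1}\rceil$ gives likewise $l\geq\frac{m}{d}+2$, again yielding hyperbolicity. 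Either way $\tmu^{-1}(\rho_E^{-1}(x_0))$ is Brody hyperbolic, so $\tilde g$ and hence $g$ are constant; thus $Y_0$ is Brody hyperbolic and Theorem~\ref{deformation} finishes the proof.

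The step I expect to be the main obstacle is the comparison between $Y_0$ and its smooth model $\oY_0$: checking that an arbitrary holomorphic map $\bC\to Y_0$ lifts to $\oY_0$, and controlling the behaviour over the vertex, where the section $\tmu^{-1}(H_0)$ is contracted to the $q_i$. Once the lift is in hand, the reduction to a cyclic cover of a single ruling is routine, and the numerical hypotheses have been chosen exactly so that the support bound $m-n+1$ coming from \ref{doublestar} crosses the thresholds $\frac{m}{d}+3$ and $\frac{m}{d}+2$ of Lemma~\ref{fiberhyp1}.
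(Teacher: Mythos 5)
Your proposal is correct and follows essentially the same route as the paper: degenerate to the central fiber, pass from $Y_0$ to $\oY_0$, fiber over the Brody hyperbolic base $H$ via $\rho_E\circ\tmu$, apply Lemma \ref{fiberhyp1} to the cyclic covers of the rulings using the support bound $l_x\geq m-n+1$ from \ref{doublestar}, and finish with Theorem \ref{deformation} (the paper simply asserts the reduction from $Y_0$ to $\oY_0$, whereas you spell out the lifting of entire curves, which is a worthwhile addition). One small correction: $\psi_1|_{\oY_0}$ is proper birational but \emph{not} finite --- it contracts the $d$ copies of $H$ lying in $\oY_0\cap(\hX_{0,1}\cup\cdots\cup\hX_{0,d})$ to the points $q_i$ --- so it is not the normalization of $Y_0$; this does not damage your argument, since the graph-closure lifting step only uses properness of $\psi_1|_{\oY_0}$ together with normality of $\bC$, but the sentence identifying it as the normalization should be dropped.
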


\begin{proof}\label{proof1}
We first show that $Y_0$ is Brody hyperbolic.

Proposition \ref{tcY} implies that the birational morphism
$\psi_1|_{\oY_0}:\oY_0\to Y_0$ induces an isomorphism between $\oY_0\setminus(\hX_{0,1}\cup\cdots\cup
\hX_{0,d})$ and $Y_0\setminus\{q_1,\cdots,q_d\}$. Therefore, it suffices
to show that $\oY_0$ is Brody hyperbolic.
 
Define $p_{\oY_0}:\oY_0\to H$ to be the composition map $\oY_0\to E\to H$. 
Since $H$ is Brody hyperbolic, we only need to show that
every fiber $p_{\oY_0}^{-1}(x)$ is Brody hyperbolic. 
It is clear that 
$p_{\oY_0}^{-1}(x)$ is the degree $d$ cyclic cover of 
$\rho_E^{-1}(x)$ branched along 
$\tphi^*(T)|_{\rho_E^{-1}(x)}$. Applying 
the pull back of a ruling $\alpha_x$ yields that  
$p_{\oY_0}^{-1}(x)\cong C_x$, where $C_x$ is the degree $d$ cyclic cover
of $\bP^1$ branched along $(\tphi\circ \alpha_x)^*(T)$.

Let $l_x:=\#\Supp\left((\tphi\circ \alpha_x)^*(T)\right)$.
Since $T$ satisfies \ref{doublestar}, $l_x\geq m-n+1$
for any $x\in H$. 
If $d$ is divisible by $2$ or $3$, then $m\geq d\lceil \frac{n+2}{d-1} \rceil\geq \frac{d}{d-1}(n+2)$.
Hence $l_x\geq m-n+1\geq\frac{m}{d}+3$.
If $d$ is relatively prime to $6$, then $m\geq d\lceil \frac{n+1}{d-1} \rceil\geq \frac{d}{d-1}(n+1)$. 
Hence $l_x\geq m-n+1\geq\frac{m}{d}+2$. 
Then Lemma \ref{fiberhyp1} implies that $C_x$ 
is Brody hyperbolic. 

Summing up, we always have that $p_{\oY_0}^{-1}(x)\cong C_x$ 
is Brody hyperbolic for any $x\in H$.
Therefore, $Y_0$ is Brody hyperbolic. 

We may apply Theorem 
\ref{deformation} to the family $\cY\rightarrow \bA^1$ with
$r_0=0$. Thus $Y_t$ is Brody hyperbolic for $|t|$ sufficiently
small. By Proposition \ref{tcY}, $Y_t$ is isomorphic
to the degree $d$ cyclic cover of $X$ branched along $S_t$, where $S_t$
is a generic small deformation of $mH$ for $|t|$ sufficiently small.
The theorem then follows.
\end{proof}

Next we give another deformation type theorem of hyperbolicity when the special fiber has multiple irreducible components.
It will be used to prove Theorem \ref{mainthm2}. Note that some
cases of Theorem \ref{sz} have already been used in 
\cite{sz05,zai09}.

\begin{thm}\label{sz}
Let $\pi:\cX\rightarrow R$ be a proper family of connected complex analytic spaces over a nonsingular complex curve $R$. Let $r_0\in R$ be a point.
Denote the irreducible components of the fiber $X_{r_0}$ by $X_{r_0,1},\cdots, X_{r_0,k}$.
Suppose these data satisfy the following properties:

\begin{enumerate}
\item $X_{r_0,i}$ is a Cartier divisor on $\cX$ for each $1\leq i\leq k$;
\item For any partition of indices $I\cup J=\{1,\cdots,k\}$, 
$\bigcap_{i\in I} X_{r_0,i}\setminus \bigcup_{j\in J} X_{r_0,j}$ 
is Brody hyperbolic.
\end{enumerate}

Then there exists an open neighborhood (in the Euclidean
topology)
$U\subset R$ of $r_0$ such that for each $r\in U\setminus\{r_0\}$,
the fiber $X_r$ is Brody hyperbolic.
\end{thm}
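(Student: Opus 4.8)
The plan is to argue by contradiction using Brody's reparametrization lemma, exactly as in the proof of Theorem \ref{deformation}, and then to confine the resulting limit entire curve to a single stratum by a Hurwitz-type argument. Suppose the conclusion fails. Then there is a sequence $r_\nu\to r_0$ with $r_\nu\neq r_0$ and non-constant holomorphic maps $f_\nu:\bC\to X_{r_\nu}$. Fix a hermitian metric $\omega$ on a projective embedding of $\cX$. After applying the Brody reparametrization lemma we may assume $\|f_\nu'(0)\|_\omega=1$ and $\|f_\nu'\|_\omega\le 1$ on $\bC$; by the Arzel\`a--Ascoli theorem and Montel's theorem a subsequence converges locally uniformly to a holomorphic $f:\bC\to\cX$ with $\|f'(0)\|_\omega=1$, so $f$ is non-constant. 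Since $\pi\circ f_\nu\equiv r_\nu\to r_0$, we get $\pi\circ f\equiv r_0$, i.e. $f(\bC)\subset X_{r_0}$.

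Next I would record which components carry the image. Set $I:=\{i:f(\bC)\subset X_{r_0,i}\}$ and $J:=\{1,\dots,k\}\setminus I$. By definition of $I$ we have $f(\bC)\subset\bigcap_{i\in I}X_{r_0,i}$, so it suffices to prove that $f(\bC)$ is disjoint from $X_{r_0,j}$ for every $j\in J$; this yields $f(\bC)\subset\bigcap_{i\in I}X_{r_0,i}\setminus\bigcup_{j\in J}X_{r_0,j}$, which is Brody hyperbolic by hypothesis (2), forcing $f$ to be constant --- the desired contradiction. (In particular $I\neq\emptyset$, since $f(\bC)\subset X_{r_0}=\bigcup_i X_{r_0,i}$.)

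The heart of the argument is therefore the claim that the limit curve cannot meet the extra components $X_{r_0,j}$, $j\in J$, even at isolated points, and here I would invoke hypothesis (1) together with Hurwitz's theorem. Suppose $f(z_0)\in X_{r_0,j}$ for some $j\in J$, and put $q:=f(z_0)$. Because $X_{r_0,j}$ is a Cartier divisor on $\cX$, it admits a local defining function $y$ on a neighborhood $V$ of $q$ in $\cX$. The decisive point is that $X_{r_0,j}$ lies in the fiber $X_{r_0}=\pi^{-1}(r_0)$, whereas each $f_\nu(\bC)$ lies in the disjoint fiber $X_{r_\nu}=\pi^{-1}(r_\nu)$; hence the zero locus $\{y=0\}=X_{r_0,j}\cap V$ is disjoint from $X_{r_\nu}$ near $q$, so $y\circ f_\nu$ is a nowhere-vanishing holomorphic function on a fixed small disk about $z_0$ for all large $\nu$. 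Since $y\circ f_\nu\to y\circ f$ locally uniformly, Hurwitz's theorem forces $y\circ f$ to be either nowhere vanishing or identically zero near $z_0$; as $y\circ f(z_0)=y(q)=0$, we must have $y\circ f\equiv 0$, so $f$ maps a neighborhood of $z_0$ into $X_{r_0,j}$. By the identity theorem $f(\bC)\subset X_{r_0,j}$, contradicting $j\in J$. This proves the claim, and with it the theorem.

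I expect this Hurwitz step to be the main obstacle --- or rather the main idea. A priori the limit curve could oscillate between components and meet $X_{r_0,j}$ in a discrete set, in which case $f$ would only map $\bC$ minus a discrete set into the Brody hyperbolic stratum, and bare Brody hyperbolicity of the strata would not suffice; indeed $X_{r_0}$ itself need not be Brody hyperbolic, which is exactly why the conclusion excludes $r_0$. What rescues the argument is that $f$ is a \emph{limit of curves in nearby fibers}: the disjointness of the central-fibre components from the neighbouring fibers makes the relevant local equations non-vanishing along the $f_\nu$, so Hurwitz promotes ``$f$ touches $X_{r_0,j}$'' to ``$f$ is contained in $X_{r_0,j}$''. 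Hypotheses (1) and (2) enter precisely here and in the final step, respectively.
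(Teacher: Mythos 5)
Your proof is correct and follows essentially the same route as the paper's: both extract a limit Brody curve $f$ with image in $X_{r_0}$ and confine it to a single stratum $\bigcap_{i\in I} X_{r_0,i}\setminus\bigcup_{j\in J}X_{r_0,j}$, the only difference being that the paper cites the generalized Hurwitz theorem \cite[3.6.11]{kob98} for the containment-or-disjointness dichotomy, which you reprove by hand from the Cartier hypothesis via local defining equations, classical Hurwitz, and the identity theorem. The sole cosmetic point is that $\cX$ is merely a proper complex analytic space over $R$, so instead of a ``hermitian metric on a projective embedding'' you should use a length function on $\pi^{-1}(K)$ for a compact neighborhood $K$ of $r_0$; nothing else in your argument changes.
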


\begin{proof}
Assume to the contrary that there exists a sequence of points $\{r_n\}$ converging to $r_0$ such that $X_{r_n}$ is not Brody hyperbolic for each $n$. Then there is a complex line $h_n:\bC\to X_{r_n}$. By taking a subsequence of $\{r_n\}$ if necessary, we may assume that $\{h_n\}$ converges to a complex line $h:\bC\to X_{r_0}$. Then by applying the generalized Hurwitz theorem \cite[3.6.11]{kob98} to $(\cX,X_{r_0})$, we have that 
\[
h(\bC)\subset\bigcap_{i\in I} X_{r_0,i}\setminus \bigcup_{j\in J} X_{r_0,j},
\]
where $I=\{i: h(0)\in X_{r_0,i}\}$ and $J=\{j: h(0)\not\in X_{r_0,j}\}$. However, $\bigcap_{i\in I} X_{r_0,i}\setminus \bigcup_{j\in J} X_{r_0,j}$ is Brody hyperbolic, we get a contradiction!
\end{proof}

The following theorem is a stronger result that implies Theorem \ref{mainthm2}.

\begin{thm}\label{refined2}
With the notation of Theorem \ref{refined1}, assume in addition that
 $X\setminus H$ is Brody hyperbolic. Then $Y$ is Brody
hyperbolic if one of the following holds:
\begin{enumerate}
 \item[-] $d$ is even and $m\geq d\lceil\frac{n+1}{d-1}\rceil$;
 \item[-] $d$ is odd and $m\geq d\lceil\frac{n}{d-1}\rceil$.
\end{enumerate}
\end{thm}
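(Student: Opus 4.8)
The plan is to reprise the degeneration argument of Theorem \ref{refined1}, but applied to the smooth total space $\tcY$ together with the multiple-component deformation result Theorem \ref{sz}, in place of $\cY$ and Theorem \ref{deformation}. The point of the switch is that under the present, weaker numerical hypotheses the central fiber $Y_0$ of $\cY\to\bA^1$ need not be Brody hyperbolic: a generic ruling fiber is a cyclic cover of $\bP^1$ meeting $T$ in only $\geq m-n+1$ points, which can fall short of the bound demanded by Lemma \ref{fiberhyp1}. However, after deleting the locus lying over $H_0$, each ruling fiber becomes a cyclic cover of $\bA^1$, for which the weaker Lemma \ref{fiberhyp2} suffices; removing that locus is exactly passing from $\oY_0$ to the open stratum $\oY_0\setminus\bigcup_i\hX_{0,i}$. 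By Proposition \ref{tcY} the variety $\tcY$ is smooth, so the irreducible components $\oY_0,\hX_{0,1},\dots,\hX_{0,d}$ of $\tY_0$ are Cartier divisors, and the fibers of $\tcY\to\bA^1$ are connected (a connected cyclic cover for $t\neq0$, and a chain glued along copies of $H$ for $t=0$); hence hypothesis (1) of Theorem \ref{sz} holds at $r_0=0$, and I must verify hypothesis (2) for every partition $I\cup J=\{0,1,\dots,d\}$, where $0$ labels $\oY_0$ and $i\geq1$ labels $\hX_{0,i}$.

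Running through the strata cut out by the partitions: since $\tmu^{-1}(\hX_0)=\hX_{0,1}\sqcup\cdots\sqcup\hX_{0,d}$ is a disjoint union (Proposition \ref{tcY}), any $I$ containing two distinct of the $\hX_{0,i}$ gives an empty, hence trivially hyperbolic, stratum. It therefore suffices to treat the partitions in which $I$ picks out $\oY_0$ and at most one $\hX_{0,i}$. Each $\hX_{0,i}$ maps isomorphically onto $\hX_0\cong X$ under $\tmu$ and meets $\oY_0$ exactly along a copy of $H_0\cong H$, while the $\hX_{0,i}$ are mutually disjoint. Hence the stratum $\oY_0\cap\hX_{0,i}$ (with everything else removed) is $\cong H$, Brody hyperbolic by hypothesis, and the stratum $\hX_{0,i}\setminus\oY_0$ is $\cong X\setminus H$, Brody hyperbolic precisely by the additional assumption of the theorem --- this is the only place that assumption enters.

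The principal stratum is $I=\{0\}$, namely $\oY_0\setminus(\hX_{0,1}\cup\cdots\cup\hX_{0,d})$. I would use the projection $p_{\oY_0}:\oY_0\to H$ given by $\oY_0\to E\xrightarrow{\rho_E}H$. As $\oY_0$ meets the $\hX_{0,i}$ only over $H_0$, the fiber of this stratum over $x\in H$ is the degree $d$ cyclic cover of $\rho_E^{-1}(x)\setminus\{H_0\cap\rho_E^{-1}(x)\}\cong\bA^1$ branched along $(\tphi\circ\alpha_x)^*(T)$. Since $v_0\notin\phi^*(T)$ forces $\tphi^*(T)\cap H_0=\emptyset$, this branch divisor, of degree $m$, is supported away from the deleted point, so Lemma \ref{fiberhyp2} applies with $l_x:=\#\Supp\big((\tphi\circ\alpha_x)^*(T)\big)\geq m-n+1$ by property \ref{doublestar}. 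The numerical bounds are chosen exactly so that $l_x\geq\tfrac{m}{d}+2$ when $d$ is even and $l_x\geq\tfrac{m}{d}+1$ when $d$ is odd, which forces each fiber to be Brody hyperbolic. As $H$ is projective and Brody hyperbolic, any map $\bC\to\oY_0\setminus\bigcup_i\hX_{0,i}$ projects to a constant in $H$ and so lands in a single hyperbolic fiber; thus this stratum is Brody hyperbolic.

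With all strata verified, Theorem \ref{sz} yields a punctured Euclidean neighborhood of $0\in\bA^1$ over which the fibers $\tY_t$ are Brody hyperbolic; by Proposition \ref{tcY} these are isomorphic to the cyclic cover $Y_t$ of $X$ along the generic small deformation $S_t$ of $mH$, which proves the theorem. I expect the main difficulty to be the bookkeeping of the boundary strata rather than any fresh analytic input --- in particular, the identifications $\oY_0\cap\hX_{0,i}\cong H$ and $\hX_{0,i}\setminus\oY_0\cong X\setminus H$, and the verification that the branch divisor on each ruling fiber avoids the point over $H_0$ --- since all the hyperbolicity claims then reduce to Lemma \ref{fiberhyp2}, the hypotheses on $H$ and $X\setminus H$, and the generalized Hurwitz theorem embedded in Theorem \ref{sz}.
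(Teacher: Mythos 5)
Your proposal is correct and follows essentially the same route as the paper: degenerate inside the smooth total space $\tcY$, show the open stratum $\oY_0\setminus(\hX_{0,1}\cup\cdots\cup\hX_{0,d})$ is Brody hyperbolic by fibering over $H$ and applying Lemma \ref{fiberhyp2} to the ruling fibers (cyclic covers of $\bA^1$, using the bound $l_x\geq m-n+1$ from \ref{doublestar}), identify $\hX_{0,i}\setminus\oY_0\cong X\setminus H$, and conclude with Theorem \ref{sz} and Proposition \ref{tcY}. The only difference is cosmetic: you explicitly verify the remaining partition strata (the empty ones and $\oY_0\cap\hX_{0,i}\cong H$), which the paper leaves implicit.
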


\begin{proof}\label{proof2}
We first prove that $\oY_0\setminus(\hX_{0,1}\cup\cdots\cup
\hX_{0,d})$ is Brody hyperbolic.

Consider the restriction of $p_{\oY_0}:\oY_0\to H$ on the
open subset $\oY_0\setminus(\hX_{0,1}\cup\cdots\cup
\hX_{0,d})$. Since $H$ is 
Brody hyperbolic, we only need to show that 
$p_{\oY_0}^{-1}(x)\setminus(\hX_{0,1}\cup\cdots\cup
\hX_{0,d})$ is Brody 
hyperbolic for any $x\in H$.

It is clear that $\oY_0$ is the 
degree $d$ cyclic cover of $E$ branched along 
$\tphi^*(T)|_E$, and $\hX_{0,1}\cup\cdots\cup
\hX_{0,d}=\tmu^{-1}(\hX_0)$. Therefore, 
$p_{\oY_0}^{-1}(x)\setminus(\hX_{0,1}\cup\cdots\cup
\hX_{0,d})$ is the preimage
of $\rho_E^{-1}(x)\setminus\hX_0$ under
the covering map $p_{\oY_0}^{-1}(x)\to 
\rho_E^{-1}(x)$. Applying the pull back of a ruling $\alpha_x$
with $\{\alpha_x(\infty)\}=\rho_E^{-1}(x)\cap\hX_0$
yields that $p_{\oY_0}^{-1}(x)\setminus(\hX_{0,1}\cup\cdots\cup
\hX_{0,d})\cong C_x^0$ (with the notation of Lemma \ref{fiberhyp2}
and Proof \ref{proof1}, because
$(\tphi\circ \alpha_x)^*(T)$ is supported
away from $\infty$). 

Since $T$ satisfies \ref{doublestar}, $l_x\geq m-n+1$ for any $x\in H$.
If $d$ is even, then $m\geq d\lceil\frac{n+1}{d-1}\rceil\geq 
\frac{d}{d-1}(n+1)$. Hence $l_x\geq m-n+1\geq\frac{m}{d}+2$.
If $d$ is odd, then $m\geq d\lceil\frac{n}{d-1}\rceil\geq 
\frac{d}{d-1}\cdot n$. Hence $l_x\geq m-n+1\geq\frac{m}{d}+1$.
Then Lemma \ref{fiberhyp2} implies that 
$C_x^0$ is Brody hyperbolic. 

Summing up, we always have that $p_{\oY_0}^{-1}(x)\setminus(\hX_{0,1}\cup\cdots\cup
\hX_{0,d})\cong C_x^0$ is Brody hyperbolic for any $x\in H$. Therefore,
$\oY_0\setminus(\hX_{0,1}\cup\cdots\cup
\hX_{0,d})$ is Brody hyperbolic.

On the other hand, $\hX_{0,i}\setminus\oY_0$ is isomorphic to 
$\hX_0\setminus E$, which is again isomorphic
to $X\setminus H$. Hence $\hX_{0,i}\setminus\oY_0$ is Brody hyperbolic 
for any $1\leq i\leq d$. We may apply Theorem \ref{sz} to the
family $\tcY\rightarrow \bA^1$ with $r_0=0$.
Thus $Y_t$ is Brody hyperbolic for $|t|$ sufficiently
small. By Proposition \ref{tcY}, $Y_t$ is isomorphic
to the degree $d$ cyclic cover of $X$ branched along $S_t$, where $S_t$
is a generic small deformation of $mH$ for $|t|$ sufficiently small.
The theorem then follows.
\end{proof}

Finally, we apply our arguments to hyperbolicity of the complements.
We first state a theorem which relates various notions of hyperbolicity
of complements.
\begin{thm}[Green \cite{gre77}; Howard \cite{howard}]\label{greenhoward}
Let $X$ be a compact complex space. Let $S$ be an effective Cartier divisor on $X$. If both $S$ and $X\setminus S$ are Brody hyperbolic, then $X\setminus S$ is complete hyperbolic and hyperbolically embedded in $X$.
\end{thm}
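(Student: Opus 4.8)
The plan is to establish the two conclusions --- that $M:=X\setminus S$ is hyperbolically embedded in $X$, and that $M$ is complete hyperbolic --- separately, the first being the substantive one and the second following from it together with the hyperbolicity of the boundary. The recurring technique I would use is Brody reparametrization, which converts a degenerating family of holomorphic discs in $M$ into a non-constant entire curve $g:\bC\to X$, followed by the generalized Hurwitz theorem \cite[3.6.11]{kob98} (the same tool invoked in the proof of Theorem \ref{sz}) to locate that curve relative to the divisor $S$.

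For hyperbolic embeddedness I would argue by contradiction using the definition on p.~70 of \cite{kob98}. If $M$ is not hyperbolically embedded in $X$, there are distinct points $p,q\in X$ that the Kobayashi distance $d_M$ fails to separate; concretely this produces sequences $p_k,q_k\in M$ with $p_k\to p$, $q_k\to q$, yet $d_M(p_k,q_k)\to 0$. Since small Kobayashi distance is witnessed by chains of holomorphic discs $\Delta\to M$ of small total Poincar\'e length, I would feed these chains into Brody's reparametrization lemma: using compactness of $X$ to extract a locally uniformly convergent subsequence after rescaling, one obtains a non-constant holomorphic map $g:\bC\to X$ realized as a limit of holomorphic maps $g_k:\Delta_{R_k}\to M$ on discs of radius $R_k\to\infty$. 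Non-constancy of $g$ is forced by $p\neq q$, which prevents the discs from collapsing and keeps the rescaled derivatives at the origin bounded away from $0$. Because every $g_k$ has image in $M=X\setminus S$, applying the generalized Hurwitz theorem on each compact subdisc gives the dichotomy $g(\bC)\subset S$ or $g(\bC)\subset M$. The first case contradicts the Brody hyperbolicity of $S$, the second the Brody hyperbolicity of $M$; hence $M$ is hyperbolically embedded in $X$.

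For complete hyperbolicity I would first note that the compact space $S$ is complete hyperbolic: by Brody's theorem \cite{kob98} a compact Brody hyperbolic space is Kobayashi hyperbolic, and a compact hyperbolic space is automatically complete for its Kobayashi distance. Now take a $d_M$-Cauchy sequence $\{x_k\}\subset M$; by compactness of $X$ it converges to some $x\in X$. If $x\in M$, the monotonicity bound $d_M\le d_B$ for a small coordinate ball $B\ni x$ with $B\subset M$ shows $d_M(x_k,x)\to 0$, so the sequence converges in $(M,d_M)$. If $x\in S$, I would use hyperbolic embeddedness to bound $d_M$ from below near $x$ by the distance of a length metric that is complete in the directions transverse to $S$ --- modelled on the punctured disc $\Delta^\ast$, whose Kobayashi distance to the puncture is infinite --- so that any sequence approaching $S$ has infinite $d_M$-length and cannot be Cauchy; the completeness of $S$ controls the tangential directions. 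This rules out $x\in S$, giving completeness, and combined with the previous paragraph yields exactly that $M$ is hyperbolically embedded \emph{and} complete hyperbolic.

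The main obstacle I expect is the Brody reparametrization step together with its localization: one must guarantee at once that the extracted entire curve $g$ is genuinely non-constant and that it arises as a limit of disc maps whose images lie in $M$, so that the Hurwitz dichotomy applies cleanly with respect to $S$. Making this rigorous when $X$ and $S$ are singular complex spaces, rather than smooth manifolds, requires the versions of Brody's lemma and of the generalized Hurwitz theorem valid for complex spaces, together with care in the completeness argument near singular points of $S$, where the transverse punctured-disc model must be replaced by the lower bound on $d_M$ supplied directly by hyperbolic embeddedness.
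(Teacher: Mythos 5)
The paper does not actually prove Theorem \ref{greenhoward}: it is imported as a known result, with the proof deferred to Green \cite{gre77} and Howard's unpublished manuscript \cite{howard}. So there is no in-paper argument to compare against; the fair comparison is with the classical proof (Green; Kiernan; see \cite[3.6.11--3.6.13 and the surrounding results]{kob98}), and your sketch is essentially a correct reconstruction of it. Your first half --- failure of hyperbolic embeddedness produces, via Brody reparametrization against a length function on the compact space $X$, a non-constant limit complex line $g:\bC\to X$ arising from maps into $M=X\setminus S$, and the generalized Hurwitz theorem \cite[3.6.11]{kob98} (which is where the Cartier hypothesis on $S$ is used) forces $g(\bC)\subset S$ or $g(\bC)\subset M$, contradicting Brody hyperbolicity of $S$ or of $M$ --- is exactly the standard argument, and indeed the same machinery of limit complex lines plus Hurwitz that the paper itself deploys in the proofs of Theorems \ref{sz} and \ref{def-comp}.

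Two points in your completeness half need repair, though neither is fatal. First, ``any sequence approaching $S$ has infinite $d_M$-length and cannot be Cauchy'' is a non sequitur: Cauchy sequences can have infinite length (e.g.\ $x_k=x+(-1)^k/k$ in $\bR$). The correct statement is that $d_M(z_0,z)\to\infty$ as $z\to S$ for a fixed base point $z_0\in M$, and Cauchy sequences are bounded; this divergence is what Kiernan's argument delivers. Second, ``the completeness of $S$ controls the tangential directions'' misidentifies the mechanism: the intrinsic Kobayashi distance of $S$ plays no role in bounding $d_M$, since points of $S$ are not in $M$, and tangential approach is handled by the same transverse estimate (if $x_k\to x\in S$ then $|h(x_k)|\to 0$ for a local Cartier equation $h$ of $S$, whatever the direction of approach). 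The actual proof combines two ingredients: hyperbolic embeddedness gives a localization of the Kobayashi--Royden metric, $F_M\geq c\,F_{M\cap U}$ over a smaller neighborhood $V\Subset U$ of $x\in S$, and the distance-decreasing property of $h:M\cap U\to\Delta^*$ then yields $F_M\gtrsim F_{\Delta^*}(dh(\cdot))$ there, whose integrated logarithmic divergence gives $d_M(z_0,z)\to\infty$. In particular the Brody hyperbolicity of $S$ enters the completeness half only through having established hyperbolic embeddedness in the first half --- Kiernan's theorem needs no hypothesis on $S$ beyond its being a hypersurface --- so your appeal to completeness of the compact space $S$ should simply be deleted. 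With these corrections your outline matches the classical proof behind the citation.
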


Next we give a deformation type theorem of hyperbolicity of complements of 
Cartier divisors, which will be used
to prove Theorem \ref{complement}.

\begin{thm}\label{def-comp}
Let $\pi:\cX\rightarrow R$ be a proper family of connected complex 
analytic spaces. Let $\mathcal{S}$ be an effective Cartier divisor on
$\cX$. Assume that there is a point $r_0\in R$ satisfying the following
properties:
\begin{enumerate}
\item Both $S_{r_0}$ and
$X_{r_0}\setminus S_{r_0}$ are Brody hyperbolic.
\item $\mathcal{S}$ does not contain any irreducible component of $X_{r_0}$.
\end{enumerate}
Then there exists an open
neighborhood (in the Euclidean
topology) $U\subset R$ of $r_0$ such that for each $r\in U$,
both $S_r$ and $X_r\setminus S_r$ are Brody hyperbolic. 
\end{thm}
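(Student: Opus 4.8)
The plan is to establish the two conclusions separately: the hyperbolicity of $S_r$ follows from a direct appeal to Theorem \ref{deformation}, whereas the hyperbolicity of the complement $X_r\setminus S_r$ requires a limiting argument carried out in the proper total space $\cX$, in the spirit of the proof of Theorem \ref{sz}. The final neighborhood $U$ is then the intersection of the two neighborhoods so produced.

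For the hyperbolicity of $S_r$, I would restrict $\pi$ to $\mathcal{S}$ to obtain a proper family $\mathcal{S}\to R$. Condition (2) guarantees that $\mathcal{S}$ restricts to a proper Cartier divisor on $X_{r_0}$, so the fiber of $\mathcal{S}\to R$ over $r_0$ is exactly $S_{r_0}$, which is Brody hyperbolic by hypothesis. Applying Theorem \ref{deformation} to $\mathcal{S}\to R$ then yields an open neighborhood of $r_0$ on which every $S_r$ is Brody hyperbolic. (Since Brody hyperbolicity is insensitive to connectedness, one may work component by component should $S_{r_0}$ fail to be connected; alternatively the direct limiting argument below applies verbatim to $\mathcal{S}$.)

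For the hyperbolicity of $X_r\setminus S_r$, the obstruction is that the family $\cX\setminus\mathcal{S}\to R$ of complements is not proper, so Theorem \ref{deformation} does not apply directly. I would argue by contradiction: suppose there is a sequence $r_n\to r_0$ with $X_{r_n}\setminus S_{r_n}$ not Brody hyperbolic, producing non-constant holomorphic maps $h_n:\bC\to X_{r_n}\setminus S_{r_n}\subset\cX$. Choosing a relatively compact neighborhood $U_0$ of $r_0$, properness of $\pi$ makes $\pi^{-1}(\overline{U_0})$ compact; fixing a Hermitian metric there and applying the Brody reparametrization lemma, I may replace each $h_n$ by a reparametrized map with $\|h_n'(0)\|=1$ and uniformly bounded derivative. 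These land in the fixed compact set, so a subsequence converges uniformly on compacta to a map $g:\bC\to\cX$ with $\|g'(0)\|=1$, hence non-constant; and since $\pi\circ h_n\equiv r_n\to r_0$, we get $g(\bC)\subset X_{r_0}$. The key step is to locate $g(\bC)$ via the generalized Hurwitz theorem \cite[3.6.11]{kob98} applied to the pair $(\cX,\mathcal{S})$: each $h_n$ avoids $\mathcal{S}$ (as $h_n(\bC)\subset X_{r_n}$ meets $\mathcal{S}$ only along $S_{r_n}$, which it avoids), and by condition (2) $\mathcal{S}$ restricts to a proper Cartier divisor on $X_{r_0}$ (its local equations do not vanish identically on $X_{r_0}$), so the Hurwitz dichotomy forces either $g(\bC)\subset\mathcal{S}$ or $g(\bC)\cap\mathcal{S}=\emptyset$. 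In the first case $g(\bC)\subset\mathcal{S}\cap X_{r_0}=S_{r_0}$, and in the second case $g(\bC)\subset X_{r_0}\setminus S_{r_0}$; since both loci are Brody hyperbolic, $g$ must be constant, contradicting $\|g'(0)\|=1$.

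I expect the main obstacle to be exactly this non-properness of the family of complements: one cannot extract limits inside $\cX\setminus\mathcal{S}$, so the limiting curve $g$ may a priori touch or even lie inside $S_{r_0}$. The device that rescues the argument is to take limits in the proper total space $\cX$ and then invoke the Hurwitz dichotomy, which confines $g$ to one of the two hyperbolic loci $S_{r_0}$ or $X_{r_0}\setminus S_{r_0}$. Verifying that the hypotheses of the generalized Hurwitz theorem genuinely hold—in particular that condition (2) prevents $\mathcal{S}$ from absorbing a component of $X_{r_0}$, which is what makes the dichotomy meaningful—is the delicate point of the proof.
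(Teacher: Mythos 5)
Your proposal is correct and follows essentially the same route as the paper: handle $S_r$ by applying Theorem \ref{deformation} to the proper family $\mathcal{S}\to R$, and handle the complements by taking a limit of the offending maps inside the proper total space $\cX$ and invoking the generalized Hurwitz theorem \cite[3.6.11]{kob98} for the pair $(\cX,\mathcal{S})$ to force the limit curve into either $S_{r_0}$ or $X_{r_0}\setminus S_{r_0}$, both of which are hyperbolic. The only cosmetic difference is that you spell out the Brody reparametrization explicitly, whereas the paper packages the same step in the language of limit complex lines coming from $U_{r_n}$.
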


\begin{proof}
 Let $U_r:=X_r\setminus S_r$. Since $S_{r_0}$ is Brody hyperbolic, Theorem \ref{deformation} implies that $S_r$ is Brody hyperbolic for $r$ sufficiently close to $r_0$. Therefore,
 it suffices to show that $U_r$ is Brody hyperbolic for $r$ in a small neighborhood of $r_0$.
 
 Assume to the contrary that there exists a sequence of points $\{r_n\}$ converging to $r_0$ such that
 $U_{r_n}$ is not Brody hyperbolic for each $n$. Hence $U_{r_n}$ is not hyperbolically embedded in $X_{r_n}$. Then there is a limit complex line $h_n:\bC\to X_{r_n}$ coming from $U_n$. By taking a subsequence of $\{r_n\}$ if necessary,
 we may assume that $\{h_n\}$ converges to a complex line $h:\bC\to X_{r_0}$. Then by applying the generalized Hurwitz theorem \cite[3.6.11]{kob98} to $(\cX,\mathcal{S})$, $h(\bC)$ is either contained in $S_{r_0}$ or $U_{r_0}$. However, both $S_{r_0}$ and $U_{r_0}$ are
 Brody hyperbolic, we get a contradiction!
\end{proof}

\begin{emp}[\it Proof of Theorem \ref{complement}]\label{proof3}
We first prove that $X_0 \setminus \phi^*(T)$
is Brody hyperbolic.

It is clear that the birational morphism
$\psi|_{E}:E\to X_0$ induces an isomorphism between $E\setminus \hX_0$ and $X_0\setminus\{v_0\}$. Therefore, it suffices
to show that $E\setminus \tphi^*(T)$ is Brody 
hyperbolic.

Consider the restriction of $\rho_E:E\to H$ on the open
subset $E\setminus \tphi^*(T)$. Since $H$ is Brody
hyperbolic, we only need to show that $\rho_E^{-1}(x)
\setminus \tphi^*(T)$ is Brody hyperbolic. Applying the pull back
of a ruling $\alpha_x$ yields that
$\rho_E^{-1}(x)\setminus \tphi^*(T)\cong 
  \bP^1\setminus (\tphi\circ\alpha_x)^*(T)$.

Since $T$ satisfies \ref{doublestar}, $l_x\geq m-n+1$ for any $x\in H$. Then the assumption $m\geq n+2$ implies that 
$l_x\geq 3$. Hence $\bP^1\setminus (\tphi\circ
\alpha_x)^*(T)$ is Brody hyperbolic, which means that $\rho_E^{-1}(x)\setminus \tphi^*(T)$ is also Brody hyperbolic. Consequently,
$X_0 \setminus \phi^*(T)$ is Brody hyperbolic.

On the other hand, since $\tphi^*(T)$ is disjoint from $\hX_0$, 
no fiber of $\rho_E$ is contained in $\tphi^*(T)|_E$. Hence
the restriction of $\rho_E$ on $\tphi^*(T)|_E$
is a finite morphism onto $H$. Then $H$ being Brody hyperbolic implies 
that $\tphi^*(T)|_E$ is Brody hyperbolic.
Recall that $\phi^*(T)|_{X_0}\cong\tphi^*
(T)|_E$, so $\phi^*(T)|_{X_0}$ is also Brody 
hyperbolic.

So far we have shown that both $\phi^*(T)|_{X_0}$ and
$X_0 \setminus \phi^*(T)$ are Brody hyperbolic.
Applying Theorem \ref{def-comp} to the family 
$\cX\rightarrow \bA^1$ with $\mathcal{S}=\phi^*(T)$ and $r_0=0$ yields that both 
$\phi^*(T)|_{X_t}$ and $X_t\setminus \phi^*(T)$
are Brody hyperbolic for $|t|$ sufficiently small. Since 
$(X_t,\phi^*(T)|_{X_t})$ is isomorphic to $(X,S_t)$, both
$S_t$ and $X\setminus S_t$ are Brody hyperbolic for $|t|$
sufficiently small. By Proposition \ref{tcY}, $S_t$ is a generic small deformation of $mH$ for $|t|$ sufficiently small.
The first statement of the theorem then follows. The last statement follows directly from Theorem \ref{greenhoward}. 
\qed
\end{emp}

\section{Applications and Examples}\label{ex}

\subsection{Hypersurfaces in \texorpdfstring{$\bP^n$}{Pn}}\label{ex-hyp}\hfill\par
Let us introduce some notation to describe the moduli spaces of hypersurfaces in $\bP^n$ with various hyperbolic conditions.
\begin{itemize}
\item Let $\bP_{n,\delta}$ be the projective space of dimension 
$\binom{n+\delta}{n}-1$ whose points parametrizes hypersurfaces of
degree $\delta$ in $\bP^n$.
\item Let $H_{n,\delta}\subset\bP_{n,\delta}$ be
the subset corresponding to Brody hyperbolic hypersurfaces.
\item Denote by 
$HE_{n,\delta}$ the subset of $\bP_{n,\delta}$ consisting of the
hypersurfaces of degree $\delta$ in $\bP^n$
with hyperbolically embedded complements. 
\end{itemize}

The following theorem (which essentially implies Theorem \ref{cyclichyp}) produces Brody hyperbolic hypersurfaces that are cyclic covers of $\bP^n$.

\begin{thm}\label{hcc}
Let $k,\delta$ be positive integers such that $\delta$ is a multiple of $k$. Suppose one of the following conditions holds.
\begin{enumerate}
\item $H_{n,k}$ is non-empty and $\delta\geq (n+3)k$;
\item $HE_{n,k}\cap H_{n,k}$ is non-empty and $\delta\geq (n+2)k$.
\end{enumerate}
Then there exists a Brody hyperbolic smooth hypersurface $W$ of degree $\delta$ in $\bP^{n+1}$, such that $W$ is a cyclic cover of $\bP^n$ under some linear projection.
\end{thm}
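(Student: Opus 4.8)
The plan is to realize the desired hypersurface $W$ as a cyclic cover produced by Theorem \ref{refined1} (in case (1)) or Theorem \ref{refined2} (in case (2)), applied with the base variety $X=\bP^n$ and the line bundle $L=\cO_{\bP^n}(k)$. First I would take $D$ to be a smooth Brody hyperbolic hypersurface of degree $k$, which exists by the non-emptiness of $H_{n,k}$ (respectively $HE_{n,k}\cap H_{n,k}$). Setting $H:=D\in|\cO_{\bP^n}(k)|$ and $m:=\delta/k$, the hypothesis $\delta\geq(n+3)k$ in case (1) translates into $m\geq n+3$, and the hypothesis $\delta\geq(n+2)k$ in case (2) into $m\geq n+2$. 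The degree $d$ of the cyclic cover should be taken to be $d=\delta=mk$ itself, so that $m$ is automatically a multiple of $d$ only if $m=d$, which forces a different bookkeeping; the cleaner choice is to mimic Theorem \ref{cyclichyp}, where the cyclic cover has degree $dk$ with the branch locus a small deformation of $dD$. Thus I would reread the statement so that the covering degree equals $\delta$ and the branch divisor $S$ is a generic small deformation of $(\delta/k)\cdot D=mH$ inside $|L^{\otimes m}|=|\cO_{\bP^n}(\delta)|$.

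The key geometric input is the standard fact that the degree $\delta$ cyclic cover of $\bP^n$ branched along a smooth hypersurface $S\in|\cO_{\bP^n}(\delta)|$ is a smooth hypersurface $W\subset\bP^{n+1}$ of degree $\delta$, realized as $(w^\delta=G(z_0,\dots,z_n))$ where $G$ is the defining equation of $S$; the projection $[w,z_0,\dots,z_n]\mapsto[z_0,\dots,z_n]$ from the point $[1,0,\dots,0]$ exhibits $W$ as the cyclic cover and is the linear projection named in the conclusion. So once I know $W$ is Brody hyperbolic \emph{as an abstract variety}, the ambient embedding and the linear-projection description come for free from this explicit model. I would verify that the generic small deformation $S$ supplied by Theorem \ref{refined1}/\ref{refined2} is smooth (Proposition \ref{tcY}(4) guarantees this) so that $W$ is indeed a smooth hypersurface.

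The heart of the argument is then a numerical check that the hypotheses of the appropriate refined theorem are met. In case (1) I apply Theorem \ref{refined1} with $n$, $L=\cO_{\bP^n}(k)$, covering degree $d=\delta$, and multiplicity $m=\delta/k$: I must confirm that $m\geq d\lceil\frac{n+2}{d-1}\rceil$ (or the coprime-to-$6$ variant). Since $d=\delta$ is large, $\lceil\frac{n+2}{d-1}\rceil=1$, so the condition reduces to $m\geq d$, i.e. $\delta/k\geq\delta$, which fails — this signals that the correct covering degree is \emph{not} $\delta$ but rather some divisor of $m$, exactly as in Theorem \ref{cyclichyp} where one covers with degree $dk$ and deforms $dD$. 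The main obstacle, and the step requiring the most care, is precisely this matching of parameters: choosing the covering degree so that (i) the resulting cyclic cover is a degree $\delta$ hypersurface in $\bP^{n+1}$, and (ii) the inequality $m\geq d\lceil\frac{n+2}{d-1}\rceil$ (resp. the Theorem \ref{refined2} inequality $m\geq d\lceil\frac{n+1}{d-1}\rceil$, valid once $X\setminus H$ is Brody hyperbolic, which is what $HE_{n,k}$ provides) is satisfied. Following the template of Theorem \ref{ccp2}, I expect the right choice makes $d$ itself equal to $\delta/k$ times a unit so that $\lceil\frac{n+2}{d-1}\rceil=1$ and the bound becomes $m\geq d$ with equality or near-equality, whence the thresholds $\delta\geq(n+3)k$ and $\delta\geq(n+2)k$ emerge. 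Once the parameters are aligned, hyperbolicity of $W$ follows immediately from the cited refined theorem, and the smoothness plus projection structure complete the proof.
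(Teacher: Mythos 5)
You correctly set $X=\bP^n$, $L=\cO_{\bP^n}(k)$, $H=D$, $m=\delta/k$, and you correctly diagnose the parameter clash: for the cover produced by Theorem \ref{refined1}/\ref{refined2} to \emph{be} a degree $\delta$ hypersurface in $\bP^{n+1}$, its covering degree would have to equal the branch degree $\delta$, while those theorems force the covering degree $d$ to divide $m=\delta/k$; for $k\geq 2$ these requirements are incompatible. But your proposal then ends with ``once the parameters are aligned, hyperbolicity of $W$ follows immediately from the cited refined theorem'' --- no such alignment exists, so this is a genuine gap, not bookkeeping. With the only admissible choice $d=m=\delta/k$ (which does satisfy the hypotheses, since $d\geq n+3$, resp.\ $d\geq n+2$, gives $\lceil\frac{n+2}{d-1}\rceil=1$, resp.\ $\lceil\frac{n+1}{d-1}\rceil=1$, so the bound reduces to $m\geq d$), what Theorem \ref{mainthm}, resp.\ Theorem \ref{mainthm2}, produces is the degree $d$ cyclic cover $Y$ of $\bP^n$ branched along the smooth $S\in|\cO_{\bP^n}(\delta)|$; for $k\geq 2$ this $Y$ is a hypersurface in the weighted projective space $\bP(1^{n+1},k)$, \emph{not} the hypersurface $W\subset\bP^{n+1}$ you need.

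The missing idea, which is how the paper's proof finishes, is a second step transferring hyperbolicity from $Y$ to $W$. Let $W$ be the degree $\delta$ cyclic cover of $\bP^n$ branched along the same $S$; this is your explicit model $(w^\delta=G)\subset\bP^{n+1}$, smooth because $S$ is smooth, with the linear projection from $[1,0,\dots,0]$ exhibiting the covering. Since $d=\delta/k$ divides $\delta$, the assignment $y=w^{k}$ defines a finite surjective morphism $W\to Y$ over $\bP^n$. Now hyperbolicity lifts along finite maps: any holomorphic $f:\bC\to W$ composed with $W\to Y$ must be constant because $Y$ is Brody hyperbolic, so $f(\bC)$ lies in a single fiber of $W\to Y$, which is a finite set, and hence $f$ is constant. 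Therefore $W$ is Brody hyperbolic. Inserting this bridge after your parameter analysis turns your proposal into a complete proof, and it is exactly the paper's argument.
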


\begin{proof}\hfill
\par (1) Choose a smooth hypersurface $D\in H_{n,k}$. Let $d:=\delta/k$. For any $d\geq n+3$, apply Theorem \ref{mainthm} to $(X,L,H,d,m):=(\bP^n, \cO(k), D, d, d)$ yields that there exists a degree $d$ cyclic cover $Y$ of $\bP^n$ branched along a smooth hypersurface $S$ of degree $\delta$ such that $Y$ is Brody hyperbolic. Let $W$ be the degree $\delta$ cyclic cover of $\bP^n$ branched along $S$. Then $W\to Y$ is a finite surjective morphism. Thus $Y$ being Brody hyperbolic implies that $W$ is also Brody hyperbolic.

(2) Choose a smooth hypersurface $D\in HE_{n,k}\cap H_{n,k}$. Let $d:=\delta/k$. For any $d\geq n+2$, apply Theorem \ref{mainthm2} to $(X,L,H,d,m):=(\bP^n, \cO(k), D, d, d)$. The rest of the proof is the same as (1).
\end{proof}

Next, we give a new proof to \cite[1.1]{zai09} using Mori's degeneration method.

\begin{thm}[Zaidenberg \cite{zai09}]\label{zaicone}
 Let $X=(F(z_0,\cdots,z_n)=0)$ be a Brody hyperbolic hypersurface of degree $k$ in $\bP^n$ ($n\geq 2$). We may realize
 $\bP^n$ as the hyperplane $(z_{n+1}=0)$ in $\bP^{n+1}$. Denote by $C(X):=(F(z_0,\cdots,z_n)=0)\subset\bP^{n+1}$ the projective cone over $X$.
 Let $dC(X):=(F^d=0)\subset\bP^{n+1}$ be the $d$-th thickening of $C(X)$ where $d\geq 2$ is a positive integer.
 Then a generic small deformation of $dC(X)$ (in the sense of \cite{zai09}) is Brody hyperbolic. In particular, $H_{n,k}\neq\emptyset$ implies that $H_{n+1,dk}\neq\emptyset$ for $d\geq 2$.
\end{thm}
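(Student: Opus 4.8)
The family $\{W_t\}$ with $W_0=dC(X)$ is exactly Mori's degeneration of a general degree $dk$ hypersurface in $\bP^{n+1}$ to the $d$-fold cone, so the plan is to run the degeneration argument of Sections \ref{family} and \ref{pf} with $C(X)$ in the role of $H$ and the genuinely hyperbolic base $X$ supplying hyperbolicity through the branched covers of the rulings. The essential point is that the main theorem cannot be quoted as a black box: the cone $C(X)$ is singular at the vertex $v:=[0:\cdots:0:1]$ and is very far from Brody hyperbolic, being swept out by the lines $\ell_x$ through $v$ over points $x\in X$. Hyperbolicity must instead come from the fibral covers of these rulings, precisely as the hyperbolicity of $\oY_0$ in Proof \ref{proof1} came from the cyclic covers $C_x$ of the fibers of $\rho_E$. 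Concretely I write the deformation, following Definition \ref{generic} with $H=C(X)$, as $W_t=(F^d+\sum_{i=1}^{d}t^iG_iF^{d-i}=0)$ with $G_i\in H^0(\bP^{n+1},\cO(ik))$ generic.

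First I would resolve the vertex. Let $B:=\mathrm{Bl}_v\bP^{n+1}$ with the projection $q:B\to\bP^n$ from $v$, which is a $\bP^1$-bundle; the strict transform $\widetilde{C}$ of $C(X)$ is then $q^{-1}(X)$, a \emph{smooth} $\bP^1$-bundle over the smooth Brody hyperbolic base $X$. The local computation $F=t\cdot u$ transverse to $C(X)$ shows that the correct model of the family is obtained from the degeneration of $B$ to the normal cone of $\widetilde{C}$ modeled on the construction of Section \ref{family}: in the exceptional divisor $E$ (a $\bP^1$-bundle over $\widetilde{C}$) the flat limit of $W_t$ is the degree $d$ cover $\overline{W}_0:=(u^d+\sum_{i=1}^{d}\overline{G}_i\,u^{d-i}=0)$ of $\widetilde{C}$, where $\overline{G}_i$ denotes the restriction of $G_i$.

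Next I would show $\overline{W}_0$ is Brody hyperbolic by the fibration method. Composing $\overline{W}_0\to\widetilde{C}$ with $q|_{\widetilde{C}}:\widetilde{C}\to X$ gives a map to $X$ whose fiber over $x$ is a degree $d$ cover of the ruling $\ell_x\cong\bP^1$, cut out by $u^d+\sum_i\overline{G}_i|_{\ell_x}u^{d-i}=0$, a cover whose branch divisor is the zero locus of the discriminant. Since $X$ is Brody hyperbolic it suffices to check every such fiber is hyperbolic, and here I would run the dimension count of Lemma \ref{tangentdim} verbatim: for generic $\{G_i\}$ the branch locus over \emph{every} ruling $\ell_x$ is supported on far more than $d$ points, so by a Riemann--Hurwitz computation in the spirit of Section \ref{p1} each fibral curve has all components of geometric genus $\geq 2$ and is therefore Brody hyperbolic (when the deformation is taken purely cyclic, Lemma \ref{fiberhyp1} applies directly). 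Hence $\overline{W}_0$ is Brody hyperbolic; noting that $\overline{W}_0\to W_0$ is an isomorphism away from the loci over $v$ which are contracted, just as in Proposition \ref{tcY}, I would apply Theorem \ref{deformation} (or Theorem \ref{sz} if the central fiber is reducible) to conclude that $W_t$ is Brody hyperbolic for $|t|$ small. Being, for generic $\{G_i\}$ and small $t\neq0$, a smooth hypersurface of degree $dk$ in $\bP^{n+1}$, this yields $H_{n+1,dk}\neq\emptyset$.

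The main obstacle is everything that happens over the vertex $v$. Since $C(X)$ is singular there and the relevant linear series $q^*\cO_{\bP^n}(k)$ is only semiample—trivial along the $q$-fibers rather than ample—the ample model step of Section \ref{family} degenerates in the ruling directions, and the naive central fiber $dC(X)$ is non-reduced along $C(X)$; the exceptional divisor $E_p\cong\bP^n$ of $B\to\bP^{n+1}$ also meets the strict transform of the family over $v$. The real work is therefore to build a model in which the fibers over $v$ are resolved and the components of the central fiber are Cartier, so that the hypotheses of Theorem \ref{sz} hold, and to verify that the branch-point count of Lemma \ref{tangentdim} persists over the rulings through $v$. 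Once the geometry over the vertex is under control, the fibration argument over $X$ is formally identical to Proof \ref{proof1}.
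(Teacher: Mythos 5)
Your skeleton for handling the limit fiber---fiber it over the hyperbolic base $X$, bound the number of branch points on every ruling by a dimension count in the style of Lemma \ref{tangentdim}, apply Lemma \ref{fiberhyp1} fiberwise, then conclude with Theorem \ref{deformation}---is exactly the paper's argument. But your proposal has a gap precisely at the crux, and you say so yourself: ``the real work is therefore to build a model in which the fibers over $v$ are resolved and the components of the central fiber are Cartier.'' The paper never builds such a model; that is the whole point of invoking Mori \cite{mor75}. One adjoins a coordinate $w$ of weight $k$ and considers the family of complete intersections
\[
Y_t:=\left(tw-F(z_0,\cdots,z_n)=w^d-G(z_0,\cdots,z_{n+1})=0\right)\subset\bP(1^{n+2},k),
\]
with $G$ a general polynomial of degree $dk$. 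For $t\neq 0$, eliminating $w$ identifies $Y_t$ with the smooth degree $dk$ hypersurface $(F^d=t^dG)\subset\bP^{n+1}$, while the fiber at $t=0$ is already---with no blow-up of the vertex and no degeneration to the normal cone---a reduced and irreducible variety: the degree $d$ cyclic cover $h:Y_0\to C(X)$ branched along $(F=G=0)$, which avoids the vertex for general $G$. All the difficulties you flag (non-reducedness of $dC(X)$, the geometry over $v$, Cartier-ness of the components of the special fiber) simply do not arise in this model; in particular the central fiber is irreducible, so Theorem \ref{deformation} suffices and Theorem \ref{sz} is never needed. The hyperbolicity of $Y_0$ is then the fibration argument you describe: every generator $\ell$ of $C(X)$ meets $(G=0)$ in at least $dk-n\geq k+3$ points by the Lemma \ref{tangentdim} count, so $h^{-1}(\ell)$ is Brody hyperbolic by Lemma \ref{fiberhyp1} applied with $(d,m,l)=(d,dk,\#((G=0)\cap\ell))$, and the hyperbolicity of $X$ finishes.

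There is a second, quieter defect. By modeling the deformation on Definition \ref{generic} you allow all middle terms $G_iF^{d-i}$, and then your limit $\left(u^d+\sum_{i}\overline{G}_i\,u^{d-i}=0\right)$ is a general degree $d$ cover of each ruling, not a cyclic one. Lemma \ref{fiberhyp1} then does not apply, and the dimension count of Lemma \ref{tangentdim} cannot be run ``verbatim'': the branch locus is now the zero set of the discriminant of a degree $d$ polynomial in $u$, and to run Riemann--Hurwitz you would additionally have to control the number of its distinct roots on every ruling and the local monodromy at each of them---genuinely more work, not a cosmetic change. The deformation class in the theorem (``in the sense of \cite{zai09}'') is exactly the cyclic one, $F^d=t^dG$, i.e. $G_1=\cdots=G_{d-1}=0$; keeping the deformation purely cyclic is not an optional simplification but what makes the quoted lemmas applicable.
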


\begin{proof}
 Firstly, let us recall Mori's degeneration method from \cite{mor75}.
 
 Let $\bP(1^{n+2}, k)$ be a weighted projective space of dimension $(n+2)$ with coordinates $z_0,\cdots,z_{n+1},w$. Let $G$ be a
 general homogeneous polynomial of degree $dk$ in $z_0,\cdots,z_{n+1}$. Consider the family of complete intersections
 \[
 Y_t:=\left(tw-F(z_0,\cdots,z_n)=w^d-G(z_0,\cdots,z_{n+1})=0\right)\subset\bP(1^{n+2}, k).
 \]
  For $t\neq 0$ we can eliminate $w$ to obtain a degree $dk$ smooth hypersurface
 \[
 Y_t\cong\left(F^d(z_0,\cdots,z_n)=t^d G(z_0,\cdots,z_{n+1})\right)\subset\bP^{n+1}.
 \]
  For $t=0$ we see that $\cO_{Y_0}(1)$ is not very ample but realizes $Y_0$ as a degree $d$ cyclic cover
 \[
 h:Y_0\to C(X)=\left(F(z_0,\cdots,z_n)=0\right)\subset \bP^{n+1}
 \]
 of $C(X)$ branched along $(F=G=0)$. 
 
 Next, we will show that $Y_0$ is Brody hyperbolic. 
 Let us fix a general homogeneous polynomial $G$ from now on.
 By Lemma \ref{tangentdim}, a general hypersurface $T:=(G=0)$
 satisfies that $T$ does not contain the vertex $[0,\cdots,0,1]$ of $C(X)$, and that 
 \[
 \#(T\cap \ell)\geq dk-n
 \] for any generator $\ell$ of $C(X)$. Applying Lemma 
 \ref{fiberhyp1} to $(d,m,l):=(d,dk, \#(T\cap\ell))$ yields that
 $h^{-1}(\ell)$ is Brody hyperbolic if $\#(T\cap\ell)\geq k+3$.
 Since $X$ is a Brody hyperbolic hypersurface of degree $d$ in $\bP^n$ 
 with $n\geq 2$, it is clear that $k\geq n+3$. Hence 
 \[
 \#(T\cap\ell)\geq dk-n\geq k+3.
 \]
 Thus $h^{-1}(\ell)$ is Brody hyperbolic, which together with $X$ 
 being Brody hyperbolic implies that $Y_0$ is Brody hyperbolic.
 
 Finally, Theorem \ref{deformation} implies that $Y_t$ is Brody 
 hyperbolic for $|t|$ sufficiently small. Hence we prove the theorem.
\end{proof}

The following theorem is an improvement of 
\cite[p. 147, Corollary of Theorem II.2]{zai92}, where they assumed 
$\delta\geq (2n+1)k$ (without assuming $\delta$ being a multiple of
$k$).

\begin{thm}\label{hypersurface}
 If $H_{n,k}$ is non-empty, then $HE_{n,\delta}\cap H_{n,\delta}$ is a
 non-empty open subset of $\bP_{n,\delta}$ (in the Euclidean topology) 
 for any $\delta\geq (n+2)k$ with $\delta$ being a multiple of $k$.
\end{thm}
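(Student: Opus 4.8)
The plan is to deduce both non-emptiness and openness from Theorem \ref{complement}, combined with the deformation-type Theorem \ref{def-comp} and the Green--Howard Theorem \ref{greenhoward}. For non-emptiness I would set $X=\bP^n$, $L=\cO_{\bP^n}(k)$, and let $H$ be a smooth Brody hyperbolic hypersurface of degree $k$, which exists precisely because $H_{n,k}\neq\emptyset$. Writing $m:=\delta/k$, the hypothesis $\delta\geq(n+2)k$ together with $k\mid\delta$ gives an integer $m\geq n+2$, so Theorem \ref{complement} applies verbatim: for a generic small deformation $S$ of $mH\in|\cO_{\bP^n}(\delta)|$, both $S$ and $\bP^n\setminus S$ are Brody hyperbolic and $\bP^n\setminus S$ is hyperbolically embedded in $\bP^n$. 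Since $S$ is then a hypersurface of degree $\delta$, the point $[S]$ lies in $HE_{n,\delta}\cap H_{n,\delta}$, proving non-emptiness.

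For openness, the first step is to reinterpret membership in the set. I claim that $[S]\in HE_{n,\delta}\cap H_{n,\delta}$ if and only if both $S$ and $\bP^n\setminus S$ are Brody hyperbolic. Indeed, if $[S]$ lies in the intersection, then $S$ is Brody hyperbolic and $\bP^n\setminus S$ is hyperbolically embedded, hence Kobayashi hyperbolic and in particular Brody hyperbolic. Conversely, since $\bP^n$ is compact and $S$ is an effective Cartier divisor, Theorem \ref{greenhoward} shows that Brody hyperbolicity of both $S$ and $\bP^n\setminus S$ forces $\bP^n\setminus S$ to be hyperbolically embedded, so that $[S]\in HE_{n,\delta}$, while $S$ Brody hyperbolic gives $[S]\in H_{n,\delta}$.

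With this reinterpretation in hand, I would verify openness point by point using the universal family. Let $\pi:\cX=\bP^n\times\bP_{n,\delta}\to\bP_{n,\delta}$ be the second projection, a proper family every fiber of which is $\bP^n$, and let $\mathcal{S}\subset\cX$ be the universal hypersurface, cut out by the universal polynomial of bidegree $(\delta,1)$; this is an effective Cartier divisor whose restriction to the fiber over $r$ is $S_r$. Fix any $r_0\in HE_{n,\delta}\cap H_{n,\delta}$. By the reinterpretation, both $S_{r_0}$ and $\bP^n\setminus S_{r_0}$ are Brody hyperbolic, so condition (1) of Theorem \ref{def-comp} holds; condition (2) holds as well, since $S_{r_0}$ is a proper hypersurface in $\bP^n$ and hence $\mathcal{S}$ cannot contain the fiber $X_{r_0}=\bP^n$. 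Theorem \ref{def-comp} then yields a Euclidean neighborhood $U$ of $r_0$ on which both $S_r$ and $\bP^n\setminus S_r$ are Brody hyperbolic, i.e.\ $U\subset HE_{n,\delta}\cap H_{n,\delta}$. As $r_0$ was arbitrary, the set is open.

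I do not anticipate a serious obstacle: all of the analytic content is already packaged into the earlier theorems, and the remaining work is bookkeeping, namely checking that the universal hypersurface is a genuine Cartier divisor containing no fiber, and tracking the equivalence between hyperbolic embeddedness of the complement and Brody hyperbolicity of both $S$ and its complement through Theorem \ref{greenhoward}. The only place the numerics enter is the divisibility $\delta=mk$ with $m\geq n+2$, which is exactly what is needed to invoke Theorem \ref{complement} in the non-emptiness step; the openness step uses no degree bound at all.
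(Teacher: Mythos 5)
Your proposal is correct and follows essentially the same route as the paper: non-emptiness via Theorem \ref{complement} applied to $X=\bP^n$, $L=\cO_{\bP^n}(k)$, $H\in H_{n,k}$ with $m=\delta/k\geq n+2$, and openness via Theorem \ref{greenhoward} combined with Theorem \ref{def-comp}. The paper states the openness step in one line, so your explicit use of the universal hypersurface over $\bP_{n,\delta}$ and the Green--Howard equivalence is just a fleshed-out version of the same argument.
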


\begin{proof}
 Let $m=\delta/k\geq n+2$. Apply Theorem \ref{complement} to $X=\bP^n$, 
 $L=\cO_{\bP^n}(k)$, $H\in H_{n,k}$ yields that both $S$ and $\bP^n
 \setminus S$ are Brody hyperbolic for a generic small 
 deformation $S$ of $mH\in|\cO_{\bP^n}(\delta)|$. Theorem \ref{greenhoward}
 implies that $S\in HE_{n,\delta}\cap H_{n,\delta}$ for any generic small 
 deformation $S$ of $mH$, hence $HE_{n,\delta}\cap H_{n,\delta}$ is
 non-empty. The openness of $HE_{n,\delta}\cap H_{n,\delta}$ follows
 from Theorem \ref{greenhoward} and \ref{def-comp}.
\end{proof}

\subsection{Surfaces}\label{ex-surface}
\hfill

The following theorem provides new examples of hyperbolic surfaces in $\bP^3$ of minimal degree $15$.

\begin{thm}
Let $\delta=d\cdot k$ be the product of two positive integers $d\geq 3$, $k\geq 5$.
Then there exists a smooth Brody hyperbolic surface $X_\delta$ of degree $\delta$ in $\bP^3$ that is a cyclic cover of $\bP^2$ under some linear projection.
\end{thm}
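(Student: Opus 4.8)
The plan is to realize $X_\delta$ as the degree-$\delta$ cyclic cover of $\bP^2$ branched along a suitable curve $S$, and to deduce its Brody hyperbolicity by factoring it through a lower-degree cyclic cover to which Theorem \ref{refined2} applies. First I would fix, for $k\geq 5$, a smooth plane curve $D$ of degree $k$ with $\bP^2\setminus D$ Brody hyperbolic; the existence of such a $D$ is exactly Zaidenberg's result \cite{zai89}. Since a smooth plane curve of degree $k$ has genus $\binom{k-1}{2}$, and $k\geq 5$ forces this genus to be $\geq 6 \geq 2$, the curve $D$ is automatically Brody hyperbolic. Thus $D\in |\cO_{\bP^2}(k)|$ is a smooth hypersurface with both $D$ and $\bP^2\setminus D$ Brody hyperbolic, which is precisely the input needed for Theorem \ref{refined2}.

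Next I would apply Theorem \ref{refined2} with $(X,L,H,d,m)=(\bP^2,\cO_{\bP^2}(k),D,d,d)$, so that $n=2$ and $m=d$. The numerical hypothesis then reads: if $d$ is even we need $m\geq d\lceil\frac{n+1}{d-1}\rceil=d\lceil\frac{3}{d-1}\rceil$, which with $m=d$ holds iff $d\geq 4$; if $d$ is odd we need $m\geq d\lceil\frac{n}{d-1}\rceil=d\lceil\frac{2}{d-1}\rceil$, which with $m=d$ holds iff $d\geq 3$. Since every integer $d\geq 3$ is either odd and $\geq 3$ or even and $\geq 4$, the hypothesis is met for all $d\geq 3$. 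Theorem \ref{refined2} then gives that for a generic small deformation $S$ of $dD\in|\cO_{\bP^2}(dk)|$, the degree-$d$ cyclic cover $Y$ of $\bP^2$ branched along $S$ is Brody hyperbolic, and (by Proposition \ref{tcY}) $S$ is smooth.

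Then I would realize the target surface and compare it with $Y$. Writing $S=(G=0)$ with $G$ homogeneous of degree $\delta=dk$ in the coordinates $z_0,z_1,z_2$ of $\bP^2$, the degree-$\delta$ cyclic cover of $\bP^2$ branched along $S$ is $X_\delta=(w^\delta-G(z_0,z_1,z_2)=0)\subset\bP^3$, a hypersurface of degree $\delta$. Its only possible singularities lie over the singular locus of $S$, so smoothness of $S$ yields smoothness of $X_\delta$, and the linear projection from $[0:0:0:1]$ exhibits $X_\delta$ as the cyclic cover $X_\delta\to\bP^2$. Because $\delta=dk$, the assignment $w\mapsto w^k$ defines a finite surjective morphism $X_\delta\to Y$ onto the degree-$d$ cover $(u^d=G)$ (with $\deg u=k$), exactly as in the proof of Theorem \ref{hcc}. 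A non-constant holomorphic map $\bC\to X_\delta$ would compose with this finite map to a non-constant map $\bC\to Y$ (its image cannot lie in a finite fiber), contradicting the hyperbolicity of $Y$; hence $X_\delta$ is Brody hyperbolic.

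Taking $d=3$, $k=5$ produces $\delta=15$, the minimal degree claimed. The single delicate point is the numerical bookkeeping of the previous paragraph: the entire gain over a direct appeal to Theorem \ref{hcc}(2), which would only reach $\delta\geq 4k$, comes from exploiting the sharper odd-$d$ bound in Theorem \ref{refined2} to bring the cover degree down to $d=3$. Everything else — the automatic hyperbolicity of $D$ from its genus, the standard realization of the cyclic cover as a hypersurface in $\bP^3$, and the finite-morphism descent of hyperbolicity — is routine and presents no real obstacle.
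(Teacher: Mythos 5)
Your proposal is correct and follows essentially the same route as the paper: choose Zaidenberg's curve $D$ with $\bP^2\setminus D$ (and, by genus, $D$ itself) Brody hyperbolic, apply Theorem \ref{refined2} with $(X,L,H,d,m)=(\bP^2,\cO(k),D,d,d)$ — the odd-$d$ bound being exactly what rescues the case $d=3$ — and then descend hyperbolicity from the degree-$\delta$ hypersurface cover to the degree-$d$ cover via the finite map $w\mapsto w^k$, just as in the proof of Theorem \ref{hcc}. The only cosmetic difference is that you invoke Theorem \ref{refined2} uniformly for all $d\geq 3$, whereas the paper uses Theorem \ref{mainthm2} for $d\geq 4$ and switches to Theorem \ref{refined2} only when $d=3$; these are equivalent since the former is a special case of the latter.
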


\begin{proof}
According to \cite{zai89}, for any $k\geq 5$ there exists a smooth curve $D$ in $\bP^2$ of degree $k$ with $\bP^2\setminus D$ being Brody hyperbolic. Then the proof is along the same line as of Theorem \ref{hcc} (2), except that we apply Theorem \ref{refined2} instead of Theorem \ref{mainthm2} when $d=3$.
\end{proof}

Next, we prove Theorem \ref{ccp2}.

\begin{thm}[=Theorem \ref{ccp2}]\label{cyclicp2}
Let $l\geq 3$, $k\geq 5$ be two positive integers. Let $D$ be a
smooth plane curve of degree $k$ such that $\bP^2\setminus D$ 
is Brody hyperbolic. (The existence of such $D$ was shown by 
Zaidenberg in \cite{zai89}.) Let $S$ be a generic small deformation 
of $2l D$. Then the double cover of $\bP^2$ branched along $S$ 
is Brody hyperbolic.
\end{thm}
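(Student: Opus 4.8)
The plan is to directly apply Theorem~\ref{refined2} with the specific data coming from the double cover setup. In the notation of Theorem~\ref{refined1} and \ref{refined2}, I would set $X=\bP^2$, so $n=2$; $L=\cO_{\bP^2}(k)$; $H=D$ the given smooth plane curve of degree $k$; the cyclic cover degree $d=2$; and the branch multiple $m=2l$. Observe that $m=2l$ is indeed a multiple of $d=2$, and both $m,d\geq 2$ since $l\geq 3$ forces $m=2l\geq 6$. The key hypotheses to verify are that $H=D$ is Brody hyperbolic and that $X\setminus H=\bP^2\setminus D$ is Brody hyperbolic. The second is given by assumption (existence guaranteed by Zaidenberg \cite{zai89}); the first follows automatically, since any subvariety of a variety whose complement in $\bP^2$ is Brody hyperbolic must itself be Brody hyperbolic --- more directly, a non-constant holomorphic map $\bC\to D$ would compose with the inclusion $D\hookrightarrow\bP^2$, but such maps are irrelevant here: what we actually need is that $D$ itself carries no entire curves, which holds because $\bP^2\setminus D$ Brody hyperbolic implies $D$ Brody hyperbolic (a curve $\bC\to D\subset\bP^2$ together with the hyperbolicity of the complement contradicts no hypothesis directly, so I would instead note that $D$ smooth of degree $k\geq 5$ has genus $\binom{k-1}{2}\geq 6\geq 2$, hence is of general type and Brody hyperbolic).

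The main computation is to check the numerical bound in Theorem~\ref{refined2}. Since $d=2$ is even, the relevant condition is $m\geq d\lceil\frac{n+1}{d-1}\rceil$. Substituting $n=2$ and $d=2$ gives $d\lceil\frac{n+1}{d-1}\rceil=2\lceil\frac{3}{1}\rceil=2\cdot 3=6$. The hypothesis $l\geq 3$ yields $m=2l\geq 6$, so the inequality $m\geq 6$ is satisfied. Thus all hypotheses of Theorem~\ref{refined2} are met, and the theorem concludes that the degree $d=2$ cyclic cover $Y$ of $\bP^2$ branched along a generic small deformation $S$ of $mH=2lD$ is Brody hyperbolic. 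Since the degree $2$ cyclic cover branched along $S$ is precisely the double cover of $\bP^2$ branched along $S$, this is exactly the assertion of the theorem.

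The only genuinely delicate point --- which is really the content of Theorem~\ref{refined2} rather than of this corollary --- is ensuring that the Brody hyperbolicity of the complement $\bP^2\setminus D$ is what drives the improved bound. In the proof of Theorem~\ref{refined2}, the hypothesis that $X\setminus H$ is Brody hyperbolic is used to show that each component $\hX_{0,i}\setminus\oY_0\cong X\setminus H$ of the special fiber is hyperbolic, which together with the fiberwise cyclic cover of $\bA^1$ analysis (Lemma~\ref{fiberhyp2}) and Theorem~\ref{sz} produces hyperbolicity of the nearby fibers. For this corollary I do not need to re-run that argument; I only verify its hypotheses, so the proof is essentially a matter of bookkeeping the parameters. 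I expect no real obstacle: the statement follows by specializing Theorem~\ref{refined2}, and the remark ``the minimal degree of $S$ is $30$'' is just the observation that the smallest admissible $\delta=mk=2lk\geq 6\cdot 5=30$, attained at $l=3$, $k=5$.
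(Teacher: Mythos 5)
Your proposal is correct and follows essentially the same route as the paper, whose entire proof is to apply Theorem~\ref{mainthm2} to $(X,L,H,d,m)=(\bP^2,\cO(k),D,2,2l)$; invoking Theorem~\ref{refined2} with $d=2$ even gives the identical bound $m\geq 6$. Your brief detour about why $D$ itself is Brody hyperbolic is muddled at first (hyperbolicity of the complement does not formally imply hyperbolicity of the divisor), but you land on the correct justification, namely that a smooth plane curve of degree $k\geq 5$ has genus $\binom{k-1}{2}\geq 2$.
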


\begin{proof}
 Apply Theorem \ref{mainthm2} to $(X,L,H,d,m):=(\bP^2, \cO(k), D, 2, 2l)$.
\end{proof}

\begin{expl}\label{k3}
Let $(X_0,L_0)$ be a primitively polarized K3 surface of degree $2l$ for $l\in\bZ_{>0}$. For any $m\geq 4$, denote by $M_0:=L_0^m$. Pick a general member $H\in |L_0|$, then $H$ is smooth and $g(H)\geq 2$. Let $S_0$ be a generic small deformation of $mH$ that is smooth. Then Theorem \ref{complement} implies that $S_0$ and $X_0\setminus S_0$ are both Brody hyperbolic.

There exists a deformation $(\cX,\cM)$ of $(X_0, M_0)$ over $\Delta$ such that $(X_t,M_t)$ is a primitively polarized K3 surface of degree $2lm^2$ for $t\in\Delta\setminus\{0\}$. It is clear that $h^0(X_t,M_t)$ does not depend on the choice of $t$ in $\Delta$. Hence Grauert's theorem implies that $\pi_*\cM$ is a locally free sheaf on $\Delta$, where $\pi:\cX\to\Delta$ is the projection map. In other
words, $\{H^0(X_t,M_t)\}_{t\in\Delta}$ forms a holomorphic vector bundle over $\Delta$. Now we may deform $S_0$ to a family of divisors $S_t\in |M_t|$ for $|t|$ sufficiently small. By choosing a generic deformation, we may assume that $S_t$ is smooth for $|t|$ sufficiently small. Hence Theorem \ref{def-comp} implies that both $S_t$ and $X_t\setminus S_t$ are Brody hyperbolic for $|t|$ sufficiently small. Moreover, $X_t\setminus S_t$ is complete hyperbolic and hyperbolically embedded in $X_t$ by Theorem \ref{greenhoward}.

As a consequence, for any $l\geq 1$ and $m\geq 4$ there exists a primitively polarized K3 surface $(X,M)$ of degree $2lm^2$ 
and a smooth curve $S\in |M|$, such that $X\setminus S$ is complete hyperbolic and hyperbolically embedded in $X$. Notice that the minimal degree of $(X,M)$ is $32$.
\end{expl}


\begin{thebibliography}{99}


 \bibitem[Bro16]{bro16} Damian Brotbek: {\it On the hyperbolicity of general 
 hypersurfaces}.  Preprint available at \href{http://arxiv.org/abs/1604.00311}
 {\textsf{arXiv:1604.00311}}.

 \bibitem[Cle86]{cle86} Herbert Clemens: {\it Curves on generic hypersurfaces}.
 Ann. Sci. \'Ecole Norm. Sup. (4) 19 (1986), no. 4, 629-636.

 \bibitem[CZ03]{cz03} Ciro Ciliberto and Mikhail Zaidenberg: {\it 
 $3$-fold symmetric products of curves as hyperbolic hypersurfaces
 in $\bP^4$}. Internat. J. Math. 14 (2003), no. 4, 413-436.

 \bibitem[CZ13]{cz13} Ciro Ciliberto and Mikhail Zaidenberg: {\it Scrolls and
 hyperbolicity}. Internat. J. Math. 24 (2013), no. 4, 1350026, 25 pp. 
 
 \bibitem[Dem15]{dem15} Jean-Pierre Demailly: {\it 
 Proof of the Kobayashi conjecture on the hyperbolicity of very 
 general hypersurfaces}.
 Preprint available at \href{http://arxiv.org/abs/1501.07625}
 {\textsf{arXiv:1501.07625}}.
 
 \bibitem[DEG00]{deg00} Jean-Pierre Demailly and Jawher El Goul:
 {\it Hyperbolicity of generic surfaces of high degree in projective $3$-space}.
 Amer. J. Math. 122 (2000), no. 3, 515-546.
 
 \bibitem[Den16]{den16} Ya Deng: {\it Effectivity in the 
 hyperbolicity-related problems
}. Preprint available at \href{http://arxiv.org/abs/1606.03831}
 {\textsf{arXiv:1606.03831}}.
 
 \bibitem[DMR10]{dmr10} Simone Diverio, Joel Merker and Erwan Rousseau:
 {\it Effective algebraic degeneracy}. Invent. Math. 180 (2010), no. 1,
 161-223.
 
 \bibitem[DT10]{dt10} Simone Diverio and Stefano Trapani: {\it A remark on the codimension of the Green-Griffiths locus of generic projective hypersurfaces of high degree}. 
J. Reine Angew. Math. 649 (2010), 55-61.
 
 \bibitem[Duv04]{duv04} Julien Duval: {\it Une sextique hyperbolique dans $\bP^3
 (\bC)$}. 
 Math. Ann. 330 (2004), no. 3, 473-476. 
 
 \bibitem[Ein88]{ein88} Lawrence Ein: {\it Subvarieties of generic complete intersections}. Invent. Math. 94 (1988), no. 1, 163-169. 
 
 \bibitem[Ein91]{ein91} Lawrence Ein: {\it Subvarieties of generic 
 complete intersections. II}. Math. Ann. 289 (1991), no. 3, 465-471.
 
 \bibitem[EG03]{eg03} Jawher El Goul: {\it Logarithmic jets and hyperbolicity}.
 Osaka J. Math. 40 (2003), no. 2, 469-491.
 
 \bibitem[Fuj01]{fuj01} Hirotaka Fujimoto: {\it A family of hyperbolic hypersurfaces in the complex projective space}. The Chuang special issue. Complex Variables Theory Appl. 43 (2001), no. 3-4, 273-283.

 \bibitem[Gre77]{gre77} Mark Green: {\it The hyperbolicity of the
 complement of $2n+1$ hyperplanes in general position in $\bP^n$ and 
 related results}. Proc. Amer. Math. Soc. 66 (1977), no. 1, 109-113. 
 
 \bibitem[GG79]{GG} Mark Green and Phillip Griffiths: {\it Two applications
 of algebraic geometry to entire holomorphic mappings}. 
 The Chern Symposium 1979 (Proc. Internat. Sympos., Berkeley, Calif., 1979), 
 pp. 41-74, Springer, New York-Berlin, 1980. 
 
 \bibitem[How]{howard} Alan Howard. Unpublished manuscript.

  \bibitem[Huy15]{huy15} Dinh Tuan Huynh: {\it 
 Examples of hyperbolic hypersurfaces of low degree in projective spaces}.
 Preprint available at \href{http://arxiv.org/abs/1507.03542}
 {\textsf{arXiv:1507.03542}}.
 
  \bibitem[Huy16]{huy16} Dinh Tuan Huynh: {\it Construction of hyperbolic hypersurfaces of low degree in $\bP^n(\bC)$}. Preprint available at \href{http://arxiv.org/abs/1601.06653}
 {\textsf{arXiv:1601.06653}}.
 
  \bibitem[IT15]{it15} Atsushi Ito and Yusaku Tiba: {\it
Curves in quadric and cubic surfaces whose complements are Kobayashi hyperbolically imbedded}.
Annales de l'institut Fourier, 65 no. 5 (2015), p. 2057-2068.
 
  \bibitem[Kob70]{kob70} Shoshichi Kobayashi: {\it Hyperbolic manifolds and
  holomorphic mappings}. Pure and Applied Mathematics, 2 Marcel Dekker,
  Inc., New York 1970 ix+148 pp.
 
 \bibitem[Kob98]{kob98} Shoshichi Kobayashi: {\it
 Hyperbolic complex spaces}. Grundlehren der Mathematischen 
 Wissenschaften [Fundamental Principles of Mathematical Sciences],
 318. Springer-Verlag, Berlin, 1998. xiv+471 pp.
 
 \bibitem[Kol13]{kol13} J\'anos Koll\'ar: {\it
 Singularities of the minimal model program}. With a collaboration of S\'andor Kov\'acs.
 Cambridge Tracts in Mathematics, 200. Cambridge University Press, Cambridge, 2013. 
 x+370 pp.
 
 \bibitem[Lan86]{lang} Serge Lang: {\it Hyperbolic and Diophantine analysis}.
 Bull. Amer. Math. Soc. (N.S.) 14 (1986), no. 2, 159-205.
 
 \bibitem[Laz04]{laz04} Robert Lazarsfeld: {\it Positivity in algebraic geometry. I. Classical setting: line bundles and linear series}. Ergebnisse der Mathematik und ihrer Grenzgebiete. 3. Folge. A Series of Modern Surveys in Mathematics [Results in Mathematics and Related Areas. 3rd Series. A Series of Modern Surveys in Mathematics], 48. Springer-Verlag, Berlin, 2004. xviii+387 pp. 

 \bibitem[MN96]{mn96} Kazuo Masuda and Junjiro Noguchi: {\it A construction of 
 hyperbolic hypersurface of $\bP^n(\bC)$}. Math. Ann. 304 (1996), no. 2, 339-362.

 
 \bibitem[McQ99]{mcq99} Michael McQuillan: {\it Holomorphic curves on hyperplane sections of $3$-folds}.
 Geom. Funct. Anal. 9 (1999), no. 2, 370-392. 
 
 \bibitem[Mor75]{mor75} Shigefumi Mori: {\it On a generalization of complete intersections}. 
J. Math. Kyoto Univ. 15 (1975), no. 3, 619-646. 
 
 \bibitem[Pau08]{pau08} Mihai P\u{a}un: {\it Vector Fields on the total space of 
 hypersurfaces in the projective space and hyperbolicity}.
Math. Ann. 340 (2008), no. 4, 875-892. 

 \bibitem[RR13]{rr13} Xavier Roulleau and Erwan Rousseau: {\it On the
 hyperbolicity of surfaces of general type with small $c_1^2$}.
 J. Lond. Math. Soc. (2) 87 (2013), no. 2, 453-477.


 \bibitem[Rou07a]{rou07} Erwan Rousseau: {\it Weak analytic hyperbolicity of generic
 hypersurfaces of high degree in $\bP^4$}. 
  Ann. Fac. Sci. Toulouse Math. (6) 16 (2007), no. 2, 369-383.

 \bibitem[Rou07b]{rou07b} Erwan Rousseau: {\it Weak analytic hyperbolicity of complements of generic surfaces of high degree in projective $3$-space}. Osaka J. Math. 44 (2007), no. 4, 955-971.

 \bibitem[Rou09]{rou09} Erwan Rousseau: {\it Logarithmic vector fields and hyperbolicity}. Nagoya Math. J. 195 (2009), 21-40.

 \bibitem[SZ00]{sz00} Bernard Shiffman and Mikhail Zaidenberg: {\it Two classes of hyperbolic surfaces in $\bP^3$}. Internat. J. Math. 11 (2000), no. 1, 65-101.

 \bibitem[SZ02]{sz02} Bernard Shiffman and Mikhail Zaidenberg:
 {\it Hyperbolic hypersurfaces in $\bP^n$ of Fermat-Waring
type}. Proc. Amer. Math. Soc. 130 (2002), no. 7, 2031-2035.

 \bibitem[SZ05]{sz05} Bernard Shiffman and Mikhail Zaidenberg:
 {\it New examples of Kobayashi hyperbolic surfaces in $\bC\bP^3$}. (Russian) Funktsional. Anal. i Prilozhen. 39 (2005), no. 1, 90-94; translation in Funct. Anal. Appl. 39 (2005), no. 1, 76-79.

 \bibitem[Siu15]{siu15} Yum-Tong Siu: {\it Hyperbolicity of generic high-degree hypersurfaces
 in complex projective space}. Invent. Math. 202 (2015), no. 3, 1069-1166.

 \bibitem[SY96]{sy96} Yum-Tong Siu and Sai-Kee Yeung: {\it Hyperbolicity of the complement of a generic smooth curve of high degree in the complex projective plane}. Invent. Math. 124 (1996), no. 1-3, 573-618.

 \bibitem[SY97]{sy97} Yum-Tong Siu and Sai-Kee Yeung: {\it Defects for ample divisors of
 abelian varieties, Schwarz lemma,
and hyperbolic hypersurfaces of low degrees}. Amer. J. Math. 119 (1997), no. 5, 1139-1172.

 \bibitem[Voi96]{voi96} Claire Voisin: {\it On a conjecture of 
 Clemens on rational curves on hypersurfaces}. J. Differential Geom.
 44 (1996), no. 1, 200-213.
 
 \bibitem[Xu94]{xu94}Geng Xu: {\it Subvarieties of general hypersurfaces in projective space}. J. Differential Geom. 39 (1994), no. 1, 139-172.

 \bibitem[Zai87]{zai87} Mikhail Zaidenberg: {\it The complement to a general hypersurface of degree $2n$ in 
 $\bC\bP^n$ is not hyperbolic}. (Russian) Sibirsk. Mat. Zh. 28 (1987), no. 3, 91-100, 222. 

 \bibitem[Zai89]{zai89} Mikhail Zaidenberg: {\it Stability of hyperbolic embeddedness and the construction of examples.}
 (Russian) Mat. Sb. (N.S.) 135(177) (1988), no. 3, 361--372, 415; translation in Math. USSR-Sb. 63 (1989), no. 2, 351-361.

 \bibitem[Zai92]{zai92} Mikhail Zaidenberg: {\it Hyperbolicity in 
 projective spaces}.
 International Symposium ``Holomorphic Mappings, Diophantine Geometry
 and Related Topics'' (Kyoto, 1992). Surikaisekikenkyusho Kokyruoku  No. 819  (1993), 136-156. 

 \bibitem[Zai09]{zai09} Mikhail Zaidenberg: {\it Hyperbolicity of generic deformations}. (Russian) Funktsional. Anal. i Prilozhen. 43 (2009), no. 2, 39--46; translation in Funct. Anal. Appl. 43 (2009), no. 2, 113-118.
\end{thebibliography}
\end{document}